\documentclass[a4paper,12pt,reqno]{amsart}

\usepackage[T1]{fontenc}
\usepackage{lmodern}
\usepackage[utf8]{inputenc}

\usepackage[left=2.2cm,right=2.2cm,top=2.2cm,bottom=2.7cm]{geometry}
\usepackage{amsthm,amsmath,amsfonts,amssymb,mathtools,extarrows}
\usepackage{booktabs}
\usepackage[bbgreekl]{mathbbol}
\DeclareSymbolFontAlphabet{\mathbb}{AMSb}
\DeclareSymbolFontAlphabet{\mathbbl}{bbold}
\usepackage[numbers]{natbib}
\usepackage[
    colorlinks,
    hyperfootnotes=true,
    hyperindex,
    linkcolor=blue,
]{hyperref}
\usepackage{xcolor}

\allowdisplaybreaks

\numberwithin{equation}{section}

\theoremstyle{plain}
\newtheorem{theorem}{Theorem}[section]
\newtheorem{lemma}[theorem]{Lemma}

\newtheorem{proposition}[theorem]{Proposition}
\newtheorem{corollary}[theorem]{Corollary}

\theoremstyle{definition}

\theoremstyle{remark}
\newtheorem{remark}[theorem]{Remark}
\newtheorem{example}[theorem]{Example}

\newcommand{\Nzero}{\mathbb{N}_0}

\newcommand{\R}{\mathbb{R}}

\newcommand{\PP}{\mathbb{P}}
\newcommand{\Un}{\mathbbl{1}}

\newcommand{\E}{\mathbb{E}}

\newcommand{\ensemble}[1]{ \left\lbrace #1 \right\rbrace }

\newcommand{\norm}[1]{\left\Vert #1 \right\Vert}

\newcommand{\Unens}[1]{ \Un_{ \ensemble{#1} } }
\newcommand{\1}[1]{\Unens{#1}}

\newcommand{\Delt}{\Delta}

\newcommand{\dd}{\,\mathrm d}

\title[Stein solution factors via Mills ratios]{Stein Solution Factors via Mills Ratios:\\Affine Birth Rates and Symmetric Potential Distributions}
\author[P. Eichelsbacher]{Peter Eichelsbacher}
\address{Faculty of Mathematics, Ruhr University Bochum, Germany}
\email{peter.eichelsbacher@rub.de}
\subjclass[2020]{Primary 60E15; Secondary 62E17, 60E05, 60J27.}
\keywords{Stein's method, Stein factors, Mills ratios, birth--death processes, affine birth rates, symmetric potential distributions, Subbotin distributions, total variation, Kolmogorov distance.}

\begin{document}

\begin{abstract}
We study Stein solution factors for indicator test functions by a common Mills-ratio method in discrete and continuous settings. In the discrete case, a general birth--death formulation gives exact solution envelopes and recovers the first-increment theory of Brown and Xia. For binomial and Poisson targets, and for negative binomial targets with shape parameter $r\ge1$, the additional algebraic structure of affine birth rates yields improved uniform bounds for the Stein solution. In the continuous case, we consider symmetric densities proportional to $e^{-V}$. Under natural structural assumptions on the potential $V$, Mills ratios yield explicit finite bounds for the indicator Stein solution and show that the uniform derivative and drift Stein factors have the sharp value $1$. Under an additional one-crossing condition, the solution-factor optimization can be carried out exactly and gives the optimal value $1/(4p(0))$. The even-power targets arising in statistical mechanics, including the quartic critical Curie--Weiss law, provide the original examples, and the calculation extends to the wider Subbotin family. For $1<\beta<2$ in the Subbotin family, the exact envelope has two off-center maximizers characterized by a unique incomplete-gamma equation, while the two first-order factors remain equal to $1$. The results exhibit a common discrete--continuous mechanism behind improved and, in the continuous sharp regime, optimal Stein solution factors.
\end{abstract}

\maketitle

\section{Introduction}

Stein's method turns distributional approximation into analytic control of a solution of a characterizing equation.  The quality of the resulting probability bound is therefore tied to the quality of the corresponding \emph{Stein factors}.  In this paper we focus on indicator test functions and hence on two direct distributional metrics: total variation for integer-valued targets and Kolmogorov distance for continuous targets.  Our purpose is to show that, in both settings, Mills ratios isolate the same analytic object---an exact tail-to-mass or tail-to-density envelope---and that structural information on the target can then be used to improve the uniform bound for the Stein solution itself.

The classical Gaussian case provides the continuous benchmark.  If $Z$ is standard normal, the Stein equation for $h_z=\1{(-\infty,z]}$ is
\begin{equation}\label{eq:intro-normal-stein}
 f_z'(x)-x f_z(x)=\1{(-\infty,z]}(x)-\Phi(z).
\end{equation}
The sharp classical factors are
\begin{equation}\label{eq:intro-normal-factors}
 \sup_{z\in\R}\norm{f_z}_\infty=\frac{\sqrt{2\pi}}4,
 \qquad
 \sup_{z\in\R}\norm{f_z'}_\infty=1;
\end{equation}
see, for example, \cite{Stein1972,ChenGoldsteinShao2011}.  The discrete benchmark is Poisson.  For $Z\sim\operatorname{Po}(\lambda)$ and $A\subseteq\Nzero$,
\begin{equation}\label{eq:intro-poisson-stein}
 \lambda g_A(k+1)-k g_A(k)=\1{A}(k)-\PP(Z\in A).
\end{equation}
For a function $h$ on the integers, we write $\Delt h(k):=h(k+1)-h(k)$.
Frequently used classical estimates are
\begin{equation}\label{eq:intro-poisson-factors}
 \norm{g_A}_\infty\le 1\wedge\sqrt{\frac{2}{e\lambda}},
 \qquad
 \norm{\Delt g_A}_\infty\le\frac{1-e^{-\lambda}}{\lambda}\le 1\wedge\frac1\lambda;
\end{equation}
see \cite{BarbourHolstJanson1992,BrownXia2001}.  One of the elementary consequences of the affine calculation below is the stronger solution bound $1\wedge(2\lambda)^{-1/2}$, and keeping the full maximizer information yields a further finite-parameter improvement.

The continuous and discrete calculations are governed by the same structure.  For a continuous target
\[
 p(x)=Z_V^{-1}e^{-V(x)},
 \qquad
 F(x):=\int_{-\infty}^{x}p(t)\,\dd t,
 \qquad
 g(x)=V'(x)=-\frac{p'(x)}{p(x)},
\]
the Stein operator is $f'-gf$, and the two Mills ratios are $F/p$ and $(1-F)/p$.  For a discrete target $\pi=(\pi_k)_{k=0}^{N}$ on $\{0,1,\ldots,N\}$, where $N\in\mathbb N\cup\{\infty\}$, write
\[
 F_{k-1}:=\sum_{j=0}^{k-1}\pi_j,
 \qquad
 Q_k:=\sum_{j=k}^{N}\pi_j,
 \qquad k\ge1.
\]
In the normalization with death rate $k$, the corresponding local quantity is the birth rate
\[
 b_k=(k+1)\frac{\pi_{k+1}}{\pi_k},
\]
and the two Stein-adapted Mills ratios are
\[
 \frac{F_{k-1}}{k\pi_k},
 \qquad
 \frac{Q_k}{k\pi_k}.
\]
Most importantly, on the symmetric convex-unimodal continuous class used below, and without any extra restriction in the discrete case, the indicator solution has an exact one-dimensional envelope:
\begin{equation}\label{eq:intro-parallel-envelopes}
 \sup_A|g_A(k)|=\frac{F_{k-1}Q_k}{k\pi_k}
 \qquad\longleftrightarrow\qquad
 \norm{f_z}_\infty=\frac{F(z)(1-F(z))}{p(z)}.
\end{equation}
Thus the main analytic problem is not the formal solution of the Stein equation, but the optimization of the explicit envelope in \eqref{eq:intro-parallel-envelopes}.

The discrete part is treated first.  Brown and Xia's birth--death theory already gives a broad and sharp first-increment theory.  In our normalization their condition (C2) is exactly the statement that the two discrete Mills ratios have opposite monotonicities, and their simpler sufficient condition (C4) is a one-step condition on the birth rates.  We recover their pointwise increment factor in a few lines.  Our new question concerns the global optimization of the zeroth-order Stein factor.  Starting from the classical exact tail envelope, we restrict to affine birth rates
\[
 b_k=a+\rho k.
\]
This restriction is not merely a convenience: the stationary affine family consists exactly of the binomial ($\rho<0$), Poisson ($\rho=0$), and negative binomial ($0<\rho<1$) laws.  These three distributions are also canonical quantitative approximation targets in the Stein--Chen literature.  Chen's foundational dependent-trials argument initiated the modern Poisson approximation method, and Barbour and Holst developed it further in a range of Poisson convergence problems \cite{Chen1975,BarbourHolst1989}.  For binomial approximation, Ehm's Stein--Chen treatment of Poisson-binomial sums and R\"ollin's centered binomial approximation for locally dependent variables are representative examples \cite{Ehm1991,Rollin2008}.  Brown and Phillips developed negative binomial approximation by Stein's method and, in particular, quantified convergence to a negative binomial limit for the P\'olya distribution \cite{BrownPhillips1999}.  The generator approach of Eichelsbacher and Reinert places Poisson, binomial and negative binomial distributions in the broader class of discrete Gibbs measures \cite{EichelsbacherReinert2008}.  Choi \cite{Choi2018} subsequently bounded discrete Stein solutions through hitting and mixing times; for the binomial example this gives a logarithmic-order uniform solution bound in the support size, illustrating both the reach and the possible looseness of a general Markov-chain approach.

The novelty below lies not in the tail representation itself, but in its global optimization under the affine birth-rate structure.  Affinity supplies the rigidity needed to compare the exact solution envelope at a maximizing index with both neighboring lattice values.  This produces a simple parameter bound for the solution itself and, when the location of the maximizing lattice point is retained, a sharper finite-parameter correction.  For negative binomial targets with $r\ge1$ and for binomial targets these bounds improve the explicit arbitrary-indicator solution estimates located in the existing literature; for Poisson the same calculation gives a simple improvement of the standard uniform solution bound.

The potential formulation $p\propto e^{-V}$ also has a substantial Stein-method history beyond the Gaussian potential $V(x)=x^2/2$ and quartic potentials $V(x)=\lambda x^4$.  Chatterjee and Shao formulate non-normal approximation for densities of the form $p(x)\propto e^{-c_0G(x)}$ arising from a nonlinear exchangeable-pair regression, and explicitly treat the power-exponential family $p(x)\propto e^{-|x|^\alpha/\beta}$ \cite{ChatterjeeShao2011}.  Eichelsbacher and L\"owe obtain quantitative limits in generalized Curie--Weiss models with densities proportional to
\[
 \exp\left\{-\frac{\mu |x|^{2k}}{(2k)!}\right\},
 \qquad k\ge1,
\]
including higher-order critical examples beyond the quartic case \cite{EichelsbacherLoewe2010}.  Shao and Zhang subsequently developed Berry--Esseen bounds for a broad non-normal exchangeable-pair framework and applied it to general Curie--Weiss and related models \cite{ShaoZhang2019}.  These results provide a natural precedent for treating $V$ structurally rather than restricting attention to a single polynomial potential.

During the final preparation of this manuscript, Rapin and Swan \cite{RapinSwan2026} posted a substantially broader kernel calculus for first-order Stein equations and their derivatives.  Their treatment of the Subbotin (or exponential-power) family
\[
 p_\beta(x)=C_\beta
 \exp\left\{-\frac{|x|^\beta}{\beta(\beta-1)}\right\},
 \qquad \beta>1,
\]
prompted us to ask whether the even-power calculation already underlying the continuous examples of the present manuscript extends, within the elementary Mills-ratio and one-crossing framework, to non-integer exponents.  It does on the range $\beta\ge2$, with the same sharp Kolmogorov solution and first-derivative factors on the overlap.  The remaining interval $1<\beta<2$ falls outside our smoothness assumptions at the origin and is treated separately in Proposition~\ref{prop:subbotin-below-two}.  The chronology and the detailed comparison are recorded in Section~\ref{sec:continuous-examples}.

The continuous part of the present paper was originally motivated by the quartic critical Curie--Weiss density
\begin{equation}\label{eq:intro-quartic}
 p_\lambda(x)=\frac{2\lambda^{1/4}}{\Gamma(1/4)}e^{-\lambda x^4},\qquad \lambda>0,
\end{equation}
with the critical Curie--Weiss normalization $\lambda=1/12$.  The quartic law goes back to the classical critical limit theory of Ellis and Newman \cite{EllisNewman1978,EllisNewmanRosen1980}; quantitative non-normal Stein approximation was developed in \cite{EichelsbacherLoewe2010,ChatterjeeShao2011,ShaoZhang2019}, with subsequent non-normal moderate-deviation and non-uniform refinements \cite{ShaoZhangZhang2021,ThanhTu2025}.  Related quartic limits arise in self-organized criticality \cite{CerfGornySOC,GornyGaussian2014}, Curie--Weiss--Potts models \cite{GandolfoRuizWouts2010,EichelsbacherMartschink2015}, and the cubic mean-field Ising model \cite{ContucciMingioneOsabutey2024,EichelsbacherCubic2024}.  For the critical Ising model on sparse random graphs, Prodromidis and Sly \cite{ProdromidisSly2026} obtain a deterministic quartic limiting density in the random-regular case, whereas the Erd\H{o}s--R\'enyi case has a non-deterministic limiting law.  Rather than treating \eqref{eq:intro-quartic} first, we begin directly with the general symmetric density $p=Z_V^{-1}e^{-V}$ and return at the end to the even-power targets and their Subbotin extension.

Our continuous assumptions are deliberately hierarchical.  A basic convex-unimodal condition already yields the upper Mills bound, the inequalities $\|f_z'\|_\infty\le1$ and $\|V'f_z\|_\infty\le1$, and a finite explicit solution bound.  A second condition, equivalent to convexity of the reciprocal drift $1/V'$, yields a two-sided Mills estimate, the tail asymptotic $V'(x)(1-F(x))/p(x)\to1$, sharpness of the two constants $1$, and monotonicity of $V'f_z$.  To identify the optimal solution factor $1/(4p(0))$ one needs additional global shape information.  We formulate this as a one-crossing criterion, but also record the exact weaker condition: the one-crossing property is sufficient, not necessary.  A central point is that these are sharpness and optimization statements, not merely improved upper bounds: in the non-normal Stein papers used for comparison below, usable solution estimates are derived for approximation purposes; Eichelsbacher and L\"owe explicitly note that their solution constant is not optimal, while the exact uniform solution factor and the sharpness of the first-order constants are not determined in those comparison results.

The rest of the paper is organized accordingly.  Sections~\ref{sec:discrete-general}--\ref{sec:binomial} contain the discrete theory, with the literature comparison placed next to each family-specific result.  Sections~\ref{sec:continuous-general}--\ref{sec:continuous-examples} develop the continuous theory and its examples.

\section{Discrete Mills ratios and the Brown--Xia increment structure}\label{sec:discrete-general}

Let $\pi=(\pi_k)$ be a probability law on $\{0,1,\dots,N\}$, where $N\in\mathbb N\cup\{\infty\}$, with positive mass on its support.  We use the unit-death normalization
\begin{equation}\label{eq:general-operator}
 \mathcal A g(k)=b_k g(k+1)-kg(k),
\end{equation}
where detailed balance is
\begin{equation}\label{eq:detailed-balance}
 b_k\pi_k=(k+1)\pi_{k+1}.
\end{equation}
For finite support we put $b_N=0$.  Write
\[
 F_{k-1}:=\sum_{j=0}^{k-1}\pi_j,
 \qquad
 Q_k:=\sum_{j=k}^{N}\pi_j.
\]
For $k\ge1$ define the two discrete Mills ratios
\begin{equation}\label{eq:general-mills}
 M_k^-:=\frac{F_{k-1}}{k\pi_k},
 \qquad
 M_k^+:=\frac{Q_k}{k\pi_k}.
\end{equation}
Detailed balance gives the recurrences
\begin{equation}\label{eq:general-rec}
 b_kM_{k+1}^- - kM_k^-=1,
 \qquad
 b_kM_{k+1}^+ - kM_k^+=-1.
\end{equation}
For $h_A=\1{A}$, the bounded solution normalized by $g_A(0)=0$ is
\begin{equation}\label{eq:general-solution-mills}
 g_A(k)
 =M_k^+\PP(Z\in A,\,Z<k)
 -M_k^-\PP(Z\in A,\,Z\ge k).
\end{equation}
Consequently,
\begin{equation}\label{eq:general-envelope}
 \sup_A|g_A(k)|
 =H_k:=\frac{F_{k-1}Q_k}{k\pi_k}
 =\frac{M_k^-M_k^+}{M_k^-+M_k^+}.
\end{equation}
For fixed $k$, the solution value $g_h(k)$ depends linearly on the coordinates $h(j)\in[0,1]$.  Hence the maximum of $|g_h(k)|$ over the product cube is attained at an extreme point, that is, at an indicator function.  Thus the same envelope holds for the full class $0\le h\le1$, and we write
\begin{equation}\label{eq:C0-general}
 C_0(\pi):=\sup_{0\le h\le1}\norm{g_h}_\infty
 =\sup_{k\ge1}H_k.
\end{equation}
The exact envelope \eqref{eq:general-envelope} is already contained, in passage-time language, in the general birth--death theory of Brown and Xia \cite{BrownXia2001}; we use it as the starting point for the optimization problem.

\begin{proposition}[Mills form of the Brown--Xia condition]\label{prop:BX-mills}
For every interior $k$, the following are equivalent:
\begin{equation}\label{eq:mills-signs}
 \Delt M_k^-\ge0,
 \qquad
 \Delt M_k^+\le0,
\end{equation}
and
\begin{equation}\label{eq:C2}
 \frac{F_k}{F_{k-1}}
 \ge \frac{b_k}{k}
 \ge \frac{Q_{k+1}}{Q_k}.
\end{equation}
Under these equivalent conditions,
\begin{equation}\label{eq:increment-pointwise}
 \sup_A|\Delt g_A(k)|
 =\frac{F_{k-1}}{k}+\frac{Q_{k+1}}{b_k}.
\end{equation}
\end{proposition}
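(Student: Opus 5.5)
The plan is to make everything explicit through detailed balance \eqref{eq:detailed-balance} and the solution formula \eqref{eq:general-solution-mills}. No further analysis should be needed.

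\emph{Equivalence of \eqref{eq:mills-signs} and \eqref{eq:C2}.} Detailed balance gives $(k+1)\pi_{k+1}=b_k\pi_k$. Hence
\[
 M_{k+1}^-=\frac{F_k}{b_k\pi_k},
 \qquad
 M_{k+1}^+=\frac{Q_{k+1}}{b_k\pi_k}.
\]
At an interior $k$ all the quantities $b_k,\pi_k,F_{k-1},Q_k$ are positive. So $\Delt M_k^-\ge0$ reads $F_k/b_k\ge F_{k-1}/k$, which is the left inequality of \eqref{eq:C2}. Likewise $\Delt M_k^+\le0$ reads $Q_{k+1}/b_k\le Q_k/k$, which is the right inequality. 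This part is pure rearrangement.

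\emph{Increment formula.} Fix $A$ and set
\[
 a=\PP(Z\in A,\,Z<k),\qquad e=\pi_k\1{k\in A},\qquad c=\PP(Z\in A,\,Z\ge k+1).
\]
By \eqref{eq:general-solution-mills},
\[
 g_A(k)=M_k^+a-M_k^-(e+c),
 \qquad
 g_A(k+1)=M_{k+1}^+(a+e)-M_{k+1}^-c.
\]
Subtracting gives
\[
 \Delt g_A(k)=a\,\Delt M_k^+ + e\,(M_{k+1}^+ + M_k^-) - c\,\Delt M_k^-.
\]
The three pieces $a$, $e$, $c$ can be chosen independently, with ranges $a\in[0,F_{k-1}]$, $e\in\{0,\pi_k\}$ and $c\in[0,Q_{k+1}]$; every combination is realized by some $A$.

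Under the sign conditions \eqref{eq:mills-signs}, each term is monotone in its variable. The maximum is attained at $a=c=0$, $e=\pi_k$, that is, at $A=\{k\}$. It equals
\[
 \pi_k\bigl(M_{k+1}^++M_k^-\bigr)=\frac{Q_{k+1}}{b_k}+\frac{F_{k-1}}{k}.
\]
The minimum is attained at $e=0$, $a=F_{k-1}$, $c=Q_{k+1}$, that is, at $A=\Nzero\setminus\{k\}$. The cleanest route to its value is complement symmetry: $g_{A^c}=-g_A$, because $\1{A^c}-\PP(Z\in A^c)=-(\1{A}-\PP(Z\in A))$ and the normalized solution is unique. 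The minimum is therefore the negative of the maximum. As a check, a direct computation with $F_k=F_{k-1}+\pi_k$ and $Q_k=Q_{k+1}+\pi_k$ gives $-Q_{k+1}/b_k-F_{k-1}/k$, as expected.

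Combining the two extremes gives \eqref{eq:increment-pointwise}. The only point needing care is the bookkeeping of which $A$ realizes each extreme, together with the remark that interior $k$ guarantees positive denominators. I do not expect a genuine obstacle.
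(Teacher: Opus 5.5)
Your proof is correct and follows the paper's route: detailed balance to rewrite $M_{k+1}^\pm$, the three-term decomposition of $\Delta g_A(k)$, and the sign analysis placing the extremes at $A=\{k\}$ and its complement. Your complement symmetry $g_{A^c}=-g_A$ is the paper's observation $g_{\operatorname{supp}(\pi)}\equiv0$ (the mass-weighted coefficients sum to zero) in another form. One small wording fix: $a$ and $c$ do not take every value in $[0,F_{k-1}]$ and $[0,Q_{k+1}]$, only subset sums; your argument only uses these intervals as bounds together with joint realizability of the endpoints, and that does hold.
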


\begin{proof}
By detailed balance,
\[
 M_{k+1}^-=\frac{F_k}{b_k\pi_k},
 \qquad
 M_{k+1}^+=\frac{Q_{k+1}}{b_k\pi_k},
\]
so \eqref{eq:mills-signs} is exactly \eqref{eq:C2}.  Subtracting \eqref{eq:general-solution-mills} at $k$ and $k+1$ gives
\begin{align*}
 \Delt g_A(k)
 ={}&\Delt M_k^+\PP(Z\in A,Z<k)
      -\Delt M_k^-\PP(Z\in A,Z>k)\\
 &+\1{A}(k)\pi_k\{M_k^-+M_{k+1}^+\}.
\end{align*}
Under \eqref{eq:mills-signs}, the first two coefficients are nonpositive and the last is positive.  Their total sum is zero, because $g_{\operatorname{supp}(\pi)}\equiv0$.  Hence the largest absolute value is the positive coefficient, which equals
\[
 \pi_k\{M_k^-+M_{k+1}^+\}
 =\frac{F_{k-1}}{k}+\frac{Q_{k+1}}{b_k}.
\]
\end{proof}

\begin{remark}[Fair comparison with Brown--Xia]\label{rem:BX}
Condition \eqref{eq:C2} is precisely condition (C2) of Brown and Xia \cite{BrownXia2001} in the unit-death normalization.  Their corresponding passage-time monotonicity (C1) is exactly \eqref{eq:mills-signs}; their conditions (C0)--(C3) are equivalent, while the simple sufficient condition (C4) becomes
\begin{equation}\label{eq:C4}
 b_k-b_{k-1}\le1.
\end{equation}
Thus \eqref{eq:increment-pointwise} is Brown--Xia's pointwise first-increment factor written in Mills-ratio language.  The argument is short and analytic, but it does not improve their increment theory.  The related discrete-Gibbs formulation of Eichelsbacher and Reinert \cite{EichelsbacherReinert2008} makes the same connection with Brown--Xia's tail-ratio condition.
\end{remark}

\begin{remark}[Potential notation]
If $\pi_k\propto e^{-V(k)}$, then
\[
 b_k=(k+1)e^{V(k)-V(k+1)}.
\]
Hence \eqref{eq:C2} and \eqref{eq:C4} can be rewritten as conditions on finite differences of a discrete potential.  We shall not use this parametrization below; the birth rate is the more direct quantity for the affine calculation.
\end{remark}

\begin{remark}[Three classical increment factors]\label{rem:three-increments}
For the three affine families considered below, condition \eqref{eq:C4} is automatic and Proposition~\ref{prop:BX-mills} specializes as follows.  For Poisson, $b_k=\lambda$ and
\[
 \sup_A|\Delt g_A(k)|=\frac{F_{k-1}}k+\frac{Q_{k+1}}\lambda,
 \qquad
 \sup_A\norm{\Delt g_A}_\infty=\frac{1-e^{-\lambda}}\lambda,
\]
the classical Poisson increment factor; see, for example, \cite{BarbourHolstJanson1992}.  For $\operatorname{NB}(r,p)$, with $q=1-p$ and $b_k=q(r+k)$,
\[
 \sup_A|\Delt g_A(k)|=\frac{F_{k-1}}k+\frac{Q_{k+1}}{q(r+k)},
 \qquad
 \sup_A\norm{\Delt g_A}_\infty=\frac{1-p^r}{rq},
\]
the corresponding classical negative-binomial increment factor; see \cite{BrownPhillips1999}.  For $\operatorname{Bin}(n,p)$, in the usual normalization
\[
 p(n-k)f_A(k+1)-qk f_A(k)=\1{A}(k)-\PP(Z\in A),
\]
one obtains, for $1\le k\le n-1$,
\[
 \sup_A|\Delt f_A(k)|=\frac{F_{k-1}}{qk}+\frac{Q_{k+1}}{p(n-k)},
\]
and
\[
 \sup_A\norm{\Delt f_A}_\infty
 \le \frac{1-p^{n+1}-q^{n+1}}{(n+1)pq},
\]
the classical binomial increment bound; see \cite{Ehm1991}.
Thus the first-increment side of the affine picture is classical.  The role of affinity below is different: it makes the global solution-factor optimization algebraically rigid.
\end{remark}

\section{Affine birth rates: bounds for the solution envelope}\label{sec:affine-principle}

The representation \eqref{eq:general-envelope} reduces the zeroth-order Stein-factor problem to a one-dimensional optimization: by \eqref{eq:C0-general}, it is enough to control $H_k$ at an index where it is maximal.  For general birth rates, the two comparisons $H_k\ge H_{k-1}$ and $H_k\ge H_{k+1}$ involve the unrelated neighboring rates $b_{k-1}$ and $b_k$.  For affine birth rates,
\begin{equation}\label{eq:affine-birth}
 b_k=a+\rho k,
 \qquad a>0,
 \qquad \rho<1,
\end{equation}
the constant increment $b_k-b_{k-1}=\rho$ makes these two comparisons compatible.  The stationary affine cases are exactly
\begin{center}
\begin{tabular}{@{}lll@{}}
\toprule
Slope & Target & Parameters \\
\midrule
$\rho<0$ & binomial & $a=-\rho n$, finite support $\{0,\dots,n\}$ \\
$\rho=0$ & Poisson & $a=\lambda$ \\
$0<\rho<1$ & negative binomial & $\rho=q$, $a=qr$ \\
\bottomrule
\end{tabular}
\end{center}
Indeed, detailed balance identifies the positive-slope case with $\operatorname{NB}(a/\rho,1-\rho)$, while a negative affine slope forces a finite endpoint and hence a binomial law.

The theorem below records the resulting bounds on the solution envelope itself.  The neighboring maximality inequalities and the auxiliary algebra used to obtain them are kept in the proof.

\begin{theorem}[Affine solution-factor bound]\label{thm:affine-max}
Assume \eqref{eq:affine-birth}, let $a\ge\rho$, and let $k$ be a maximizing index of $H_j$ for which $b_k>0$.

\emph{(a)} If $a>\rho$, then
\begin{equation}\label{eq:coarse-H}
 H_k\le\frac1{\sqrt{2(a-\rho)}}.
\end{equation}

\emph{(b)} More precisely,
\begin{equation}\label{eq:refined-affine}
 H_k
 \le
 \left[
 2(a-\rho)
 +\bigl\{(1-\rho)k-(a-\rho)\bigr\}^2
 +\left\{\frac{a-\rho+2\rho k}
 {a-\rho+(1+\rho)k}\right\}^2
 \right]^{-1/2}.
\end{equation}
\end{theorem}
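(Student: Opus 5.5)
The plan is to reduce both parts to a two-variable inequality at the maximizing index $k$, using only the exact envelope \eqref{eq:general-envelope}, detailed balance \eqref{eq:detailed-balance}, and the two neighbouring maximality conditions $H_k\ge H_{k+1}$ and $H_k\ge H_{k-1}$. I have not yet carried out the final algebra, so the precise route to the bracket in \eqref{eq:refined-affine} is a target rather than a verified computation.

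\emph{Normalization.} Put $x:=F_{k-1}/\pi_k$ and $y:=Q_k/\pi_k$. Since $F_{k-1}+Q_k=1$, we have $\pi_k=1/(x+y)$ and
\[
 H_k=\frac{xy}{k(x+y)}.
\]
Detailed balance gives $\pi_{k+1}=b_k\pi_k/(k+1)$ and $\pi_{k-1}=k\pi_k/b_{k-1}$, with $b_{k-1}=b_k-\rho$. The hypothesis $a\ge\rho$ gives $b_{k-1}=a-\rho+\rho k>0$ on the relevant range. This is presumably its role, and it should be checked separately for the two signs of $\rho$.

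\emph{Neighbouring comparisons.} Inserting the expressions for $\pi_{k\pm1}$ into the envelope gives the following.
\begin{itemize}
\item The condition $H_k\ge H_{k+1}$ becomes $b_k\,xy\ge k(x+1)(y-1)$, that is,
\[
 (b_k-k)\,xy+k(x-y+1)\ge0 .
\]
\item The condition $H_k\ge H_{k-1}$ becomes, with $s:=k/b_{k-1}$, the inequality $(x-s)(y+s)\le xy$, that is,
\[
 x-y\le s .
\]
\end{itemize}
Hence the difference $d:=x-y$ is trapped between a lower bound (from the first condition, involving $xy$) and the upper bound $k/b_{k-1}$. At the boundary indices where a neighbour does not exist, only one comparison is used and the argument is adapted accordingly.

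\emph{Assembling the bound.} Next I use the identity $(x+y)^2=d^2+4xy$ and write
\[
 \frac1{H_k^2}=\frac{k^2(d^2+4xy)}{(xy)^2}.
\]
The two comparisons then have to be combined, eliminating $xy$ in favour of $d$ and the affine data $b_k=a+\rho k$ and $b_{k-1}=a-\rho+\rho k$. The aim is a lower bound for $1/H_k^2$ consisting of three pieces:
\begin{itemize}
\item the constant $2(a-\rho)$;
\item the square $\{(1-\rho)k-(a-\rho)\}^2$, which measures the deviation of the maximizer $k$ from the balance point $b_{k-1}=k$, i.e.\ $k-b_{k-1}=(1-\rho)k-(a-\rho)$;
\item the square of $\frac{a-\rho+2\rho k}{a-\rho+(1+\rho)k}=\frac{b_k+b_{k-1}-a+\rho}{k+b_{k-1}}$, which should arise from the lattice correction $s=k/b_{k-1}$.
\end{itemize}
Part (a) then follows from part (b) by discarding the two nonnegative squares. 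Alternatively, (a) can be obtained directly from the cruder estimate $xy\ge$ (quadratic in $d$) together with AM--GM.

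\emph{Main obstacle.} The difficult step is the exact elimination producing the bracket in \eqref{eq:refined-affine}. It requires the constant increment $b_k-b_{k-1}=\rho$ to make the two one-sided constraints compatible, so that a single completed square appears. I would first treat the Poisson case $\rho=0$, where $b_k=b_{k-1}=\lambda$ and the computation should reduce to $1/H_k^2\ge 2\lambda+(k-\lambda)^2+\ldots$. I would then reinsert $\rho$ by replacing $\lambda$ with $b_{k-1}$ and tracking the extra $\rho k$ terms.
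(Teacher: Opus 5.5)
Your skeleton (two neighbouring maximality comparisons at $k$, combined with $(x+y)^2=d^2+4xy$) is the right one, and your forward comparison $b_kxy\ge k(x+1)(y-1)$ is correct. The backward comparison is not. The denominator of $H_{k-1}$ is $(k-1)\pi_{k-1}=\frac{k-1}{b_{k-1}}\,k\pi_k$, not $k\pi_k$, so in fact
\[
 H_k\ge H_{k-1}\iff (x-s)(y+s)\le\frac{k-1}{b_{k-1}}\,xy .
\]
This coincides with $x-y\le s$ only when $b_{k-1}=k-1$. It implies $x-y\le s$ only when $k-1\le b_{k-1}$, and nothing guarantees that at a maximizer.

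The discrepancy matters. In the reciprocal variables $u:=k/x$, $v:=k/y$, $D:=v-u$, $c:=b_{k-1}$, your version reads $uv\ge cD$, while the correct one is $uv\ge c(1-k+c+D)$. The dropped constant $c(1-k+c)$ is exactly what makes the completed square close. With your version you are left with the constant $k^2-c^2-2\rho k$, which has no sign, instead of $(k-c)^2+2(a-\rho)$. So $d$ is not trapped as you describe.

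The boundary discussion also needs correcting. Since $b_k>0$ forces $k<N$, the forward comparison always exists. At $k=1$ you cannot simply drop the backward comparison, because for $k\le b_k$ the forward inequality alone does not bound $H_k$. Instead, the backward inequality holds there with equality, since $x=F_0/\pi_1=1/b_0=s$.

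The second gap is that the elimination, which is the whole content of the theorem, is left as a target, and the form $1/H_k^2=k^2(d^2+4xy)/(xy)^2$ obscures it. The missing idea is to work with $u=1/M_k^-$ and $v=1/M_k^+$ throughout, so that $1/H_k=u+v$. Dividing both neighbour conditions by $xy$ turns them into lower bounds for $P:=uv$ that are affine in $D$:
\[
 P\ge k(k-c-\rho-D),\qquad P\ge c(1-k+c+D).
\]
Adding them and using $(u+v)^2=D^2+4P$ gives
\[
 (u+v)^2\ge\{D-(k-c)\}^2+(k-c)^2+2(c-\rho k).
\]
Affinity enters precisely here, because $c-\rho k=a-\rho$ does not depend on $k$; dropping the two squares proves (a).

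For (b), keep $D^2+4\max\{k(k-c-\rho-D),\,c(1-k+c+D)\}$ and minimize over $D$.
\begin{itemize}
\item The two lines cross at $D_*=k-c-\frac{\rho k+c}{k+c}$.
\item The one-sided slopes satisfy $2D_*-4k<0<2D_*+4c$ (using $b_k>0$ and $\rho<1$), so $D_*$ is the minimizer.
\item The value there is $2(a-\rho)+(k-c)^2+\bigl(\frac{\rho k+c}{k+c}\bigr)^2$, which is exactly the bracket.
\end{itemize}
Thus the third square is the offset of the crossing point from $k-c$, not a correction coming from $s$. Its numerator is $\rho k+b_{k-1}=b_k+b_{k-1}-a$; your expression has a spurious $+\rho$.

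Finally, $a\ge\rho$ is needed only to make the bracket positive. Positivity of $b_{k-1}$ is automatic from detailed balance, so it is not the role of that hypothesis.
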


\begin{proof}
Write, only for the proof,
\[
 u:=(M_k^-)^{-1},
 \qquad
 v:=(M_k^+)^{-1},
 \qquad
 c:=b_{k-1}=a+\rho(k-1).
\]
By \eqref{eq:general-envelope},
\[
 H_k=\frac1{u+v}.
\]
The Mills recurrences give
\[
 M_{k+1}^-=\frac{kM_k^-+1}{b_k},
 \qquad
 M_{k+1}^+=\frac{kM_k^+-1}{b_k}.
\]
Thus $H_k\ge H_{k+1}$ is equivalent to
\begin{equation}\label{eq:neighbor-right}
 (k+u)(k-v)\le kb_k.
\end{equation}
For $k\ge2$, the backward recurrences give
\[
 M_{k-1}^-=\frac{cM_k^- -1}{k-1},
 \qquad
 M_{k-1}^+=\frac{cM_k^+ +1}{k-1},
\]
and $H_k\ge H_{k-1}$ is equivalent to
\begin{equation}\label{eq:neighbor-left}
 (c-u)(c+v)\le c(k-1).
\end{equation}
If $k=1$, then $M_1^-=1/b_0$, hence $u=c$, and \eqref{eq:neighbor-left} again holds, now with equality.

Put
\[
 D:=v-u,
 \qquad
 P:=uv.
\]
Expanding \eqref{eq:neighbor-right}--\eqref{eq:neighbor-left} yields
\begin{equation}\label{eq:two-P-bounds}
 P\ge k\{k-c-\rho-D\},
 \qquad
 P\ge c\{1-k+c+D\}.
\end{equation}
Adding the two inequalities and using $(u+v)^2=D^2+4P$ gives
\begin{equation}\label{eq:coarse-square}
 (u+v)^2
 \ge \{D-(k-c)\}^2+(k-c)^2+2(a-\rho).
\end{equation}
Dropping the two squares proves part~(a).

For the sharper bound, retain the two inequalities in \eqref{eq:two-P-bounds} separately.  Their affine right-hand sides intersect at
\[
 D_*=k-c-\frac{\rho k+c}{k+c},
\]
with common value $kc(1-\rho)/(k+c)$.  Since $b_k=c+\rho>0$ and $\rho<1$, one checks that $D_*<2k$ and $D_*>-2c$.  If $\phi$ denotes the displayed piecewise quadratic lower bound, then its one-sided derivatives at the intersection satisfy
\[
 \phi'_-(D_*)=2D_*-4k<0,
 \qquad
 \phi'_+(D_*)=2D_*+4c>0.
\]
Hence
\[
 D^2+4\max\bigl\{k(k-c-\rho-D),\,c(1-k+c+D)\bigr\}
\]
is minimized at $D_*$.  Consequently,
\begin{align*}
 H_k^{-2}=(u+v)^2
 &\ge 2(a-\rho)+(k-c)^2
 +\left(\frac{\rho k+c}{k+c}\right)^2\\
 &=2(a-\rho)
 +\bigl\{(1-\rho)k-(a-\rho)\bigr\}^2
 +\left\{\frac{a-\rho+2\rho k}
 {a-\rho+(1+\rho)k}\right\}^2.
\end{align*}
Since $a\ge\rho$, the right-hand side is positive, and part~(b) follows.
\end{proof}

\begin{remark}[Immediate Poisson consequence]\label{rem:poisson-coarse}
For Poisson, $a=\lambda$ and $\rho=0$, so part~(a) gives $(2\lambda)^{-1/2}$ at a maximizing index.  Combined with the elementary bound $C_0^{\rm P}(\lambda)\le1$, this yields
\[
 C_0^{\rm P}(\lambda)\le 1\wedge(2\lambda)^{-1/2}.
\]
The lattice-sensitive improvement from part~(b) is used below.
\end{remark}

\section{Nonnegative affine slopes: Poisson and negative binomial}\label{sec:nonnegative-slopes}

We now apply Theorem~\ref{thm:affine-max} to the two infinite-support affine families.  The zero-slope case is Poisson, whereas $0<\rho<1$ gives the negative binomial family.  In both cases the supremum in \eqref{eq:C0-general} is attained.  Indeed, detailed balance gives
\[
 \frac{\pi_{k+1}}{\pi_k}=\frac{a+\rho k}{k+1}\longrightarrow\rho<1.
\]
Hence $Q_k/\pi_k$ is bounded for all sufficiently large $k$ by a geometric-tail comparison, and therefore
\[
 0\le H_k\le\frac{Q_k}{k\pi_k}\longrightarrow0.
\]
Thus a maximizing index exists.  A common one-dimensional correction is available before the two specializations.

\par\addvspace{1.2\baselineskip}

\begin{lemma}[Uniform correction for nonnegative slopes]\label{lem:kappa-rho}
Let $0\le\rho<1$ and $a\ge\rho$.  Define
\begin{equation}\label{eq:kappa-rho}
 \kappa(\rho)
 :=\min_{0\le e\le1-\rho}
 \left\{
 e^2+\left(\frac{1+\rho-e}{2-e}\right)^2
 \right\}.
\end{equation}
At every maximizing index to which Theorem~\ref{thm:affine-max} applies,
\begin{equation}\label{eq:kappa-control-general}
 \bigl\{(1-\rho)k-(a-\rho)\bigr\}^2
 +\left\{\frac{a-\rho+2\rho k}
 {a-\rho+(1+\rho)k}\right\}^2
 \ge\kappa(\rho).
\end{equation}
Moreover, $\kappa(\rho)>\rho$ for $0<\rho<1$, while $\kappa(0)>0$.
\end{lemma}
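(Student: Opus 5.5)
The plan is to work with the single variable
\[
 e:=(1-\rho)k-(a-\rho)
\]
together with $t:=k\ge1$, eliminating $a-\rho=(1-\rho)t-e$. The constraint $a\ge\rho$ becomes $e\le(1-\rho)t$. A short computation turns the second term on the left of \eqref{eq:kappa-control-general} into $R^2$, where
\[
 R(e,t)=\frac{(1+\rho)t-e}{2t-e}=1-\frac{(1-\rho)t}{2t-e}.
\]
Note that $2t-e\ge(1+\rho)t>0$. So the left side of \eqref{eq:kappa-control-general} is $e^2+R(e,t)^2$. I will show this is $\ge\kappa(\rho)$ for every $t\ge1$ and every $e\le(1-\rho)t$. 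In particular it holds at any maximizing index, so maximality itself is not needed beyond Theorem~\ref{thm:affine-max} applying.

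I will first record two monotonicity facts.

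\emph{In $e$.} We have $\partial_e R=\bigl((1+\rho)t-2t\bigr)/(2t-e)^2<0$, so $R$ is decreasing in $e$. Its smallest admissible value is at $e=(1-\rho)t$, where $R=2\rho/(1+\rho)\ge0$. Hence $R\ge 2\rho/(1+\rho)\ge0$ throughout.

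\emph{In $t$.} The sign of $\partial_tR$ is that of $(1+\rho)(2t-e)-2\bigl((1+\rho)t-e\bigr)=(1-\rho)e$. So for $e\ge0$ the function $R$ is nondecreasing in $t$, giving $R(e,t)\ge R(e,1)=(1+\rho-e)/(2-e)$.

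The case split is then as follows.

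(i) If $e\le0$, then $R(e,t)\ge R(0,t)=(1+\rho)/2$. Hence $e^2+R^2\ge\bigl((1+\rho)/2\bigr)^2$, which is the value of the minimized expression in \eqref{eq:kappa-rho} at $e=0$, so it is $\ge\kappa(\rho)$.

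(ii) If $0\le e\le1-\rho$, the $t$-monotonicity gives $e^2+R^2\ge e^2+\bigl((1+\rho-e)/(2-e)\bigr)^2\ge\kappa(\rho)$ directly.

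(iii) If $e>1-\rho$, combine $e^2>(1-\rho)^2$ with $R\ge2\rho/(1+\rho)$. The value of the bracket in \eqref{eq:kappa-rho} at $e=1-\rho$ is exactly $(1-\rho)^2+\bigl(2\rho/(1+\rho)\bigr)^2$, so again the bound is $\ge\kappa(\rho)$.

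This proves \eqref{eq:kappa-control-general}.

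For the strict inequality $\kappa(\rho)>\rho$, note that the bracket in \eqref{eq:kappa-rho} is continuous on a compact interval. It therefore suffices to show the bracket minus $\rho$ is strictly positive at each point. I expect this to be the main obstacle, since crude bounds such as $R\ge2\rho/(1+\rho)$ or tangent-line estimates fail as $\rho\to1$. The plan is an exact algebraic reduction. Put $\delta:=1-\rho>0$, write $e=\delta\theta$ with $\theta\in[0,1]$, and set $z:=\delta/(2-\delta\theta)$, so that the second base equals $1-z$. Then
\[
 e^2+(1-z)^2-\rho=\delta^2\theta^2+z^2-2z+\delta .
\]
Using $\delta-2z=-\delta^2\theta/(2-\delta\theta)$ and writing $m:=1/(2-\delta\theta)>0$, this becomes
\[
 \delta^2\bigl(\theta^2-\theta m+m^2\bigr)=\delta^2\Bigl[\bigl(\theta-\tfrac m2\bigr)^2+\tfrac34m^2\Bigr]>0 .
\]
Hence $\kappa(\rho)>\rho$ for $0<\rho<1$. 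The same identity with $\rho=0$, $\delta=1$ gives $\kappa(0)>0$.
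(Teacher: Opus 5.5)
Your proof is correct and follows essentially the same route as the paper: the same substitution $e=(1-\rho)k-(a-\rho)$, the same three-case split ($e\le0$, $0\le e\le1-\rho$, $e>1-\rho$) using monotonicity of the ratio, and a sum-of-squares identity for the bracket minus $\rho$. Your identity $\delta^2\{\theta^2-\theta m+m^2\}$ is the paper's identity with $x=1-e$ written in your variables, with the square completed in the other variable. The only cosmetic difference is in case (iii), where you use monotonicity in $e$ and the paper uses $k\ge e/(1-\rho)$ together with monotonicity in $k$.
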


\begin{proof}
Put, only in this proof,
\[
 e:=(1-\rho)k-(a-\rho).
\]
Then $e\le(1-\rho)k$ and the second term on the left-hand side of \eqref{eq:kappa-control-general} is
\[
 \left(\frac{(1+\rho)k-e}{2k-e}\right)^2.
\]
If $e\le0$, this ratio is at least $(1+\rho)/2$, and the claim follows by comparing with the value $e=0$ in \eqref{eq:kappa-rho}.  Suppose $e>0$.  For fixed $e$ the ratio is increasing in $k$, since
\[
 \frac{\partial}{\partial k}
 \frac{(1+\rho)k-e}{2k-e}
 =\frac{(1-\rho)e}{(2k-e)^2}>0.
\]
If $0<e\le1-\rho$, then $k\ge1$ gives
\[
 \frac{(1+\rho)k-e}{2k-e}
 \ge\frac{1+\rho-e}{2-e},
\]
and \eqref{eq:kappa-control-general} follows directly from \eqref{eq:kappa-rho}.  If $e>1-\rho$, the constraint $e\le(1-\rho)k$ gives $k\ge e/(1-\rho)$ and hence
\[
 \frac{(1+\rho)k-e}{2k-e}
 \ge\frac{2\rho}{1+\rho}.
\]
The resulting lower bound is at least the value of the function in \eqref{eq:kappa-rho} at the endpoint $e=1-\rho$, and therefore again at least $\kappa(\rho)$.

Finally, with $x=1-e$,
\[
 e^2+\left(\frac{1+\rho-e}{2-e}\right)^2-\rho
 =\frac{\left(\rho-\frac{1+x^2}{2}\right)^2+\frac34(1-x^2)^2}{(1+x)^2},
\]
which is strictly positive on the relevant range for $0<\rho<1$, and positive also for $\rho=0$.
\end{proof}

\par\addvspace{1.8\baselineskip}

\subsection{Poisson: the zero-slope case}\label{subsec:poisson}

Let $Z\sim\operatorname{Po}(\lambda)$.  The exact solution factor is
\[
 C_0^{\rm P}(\lambda)
 =\sup_{k\ge1}\frac{F_{k-1}Q_k}{kp_k}.
\]
The elementary estimate $C_0^{\rm P}(\lambda)\le1$ follows from
\[
 kp_k=\operatorname{Cov}(Z,\1{Z\ge k})
 =F_{k-1}Q_k\bigl(\E[Z\mid Z\ge k]-\E[Z\mid Z\le k-1]\bigr)
 \ge F_{k-1}Q_k.
\]

\par\addvspace{1.2\baselineskip}

\begin{corollary}[Poisson solution factor]\label{cor:poisson-factor}
For every $\lambda>0$,
\begin{equation}\label{eq:poisson-lattice}
 C_0^{\rm P}(\lambda)
 \le
 \frac1{\sqrt{2\lambda+\kappa_{\rm P}(\lambda)}},
\end{equation}
where
\begin{equation}\label{eq:kappa-P}
 \kappa_{\rm P}(\lambda)
 :=\min_{k\ge1}
 \left\{(k-\lambda)^2+
 \left(\frac{\lambda}{k+\lambda}\right)^2\right\}.
\end{equation}
In particular,
\begin{equation}\label{eq:poisson-cstar}
 C_0^{\rm P}(\lambda)
 \le
 \min\left\{1,\frac1{\sqrt{2\lambda+c_*}}\right\},
\end{equation}
where
\begin{equation}\label{eq:cstar}
 c_*:=\kappa(0)
 =\min_{0\le x\le1}\left\{(1-x)^2+\left(\frac{x}{1+x}\right)^2\right\}
 =0.2333389940\ldots .
\end{equation}
\end{corollary}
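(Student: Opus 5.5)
The plan is to specialize Theorem~\ref{thm:affine-max}(b) to the Poisson case $a=\lambda$, $\rho=0$, and then to bound the lattice correction from below uniformly over $k$.

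\emph{Step 1: the exact supremum is attained.} By the discussion at the start of Section~\ref{sec:nonnegative-slopes}, $H_k\to0$, so there is a maximizing index $k\ge1$ with $C_0^{\rm P}(\lambda)=H_k$. Since $b_k=\lambda>0$ and $a=\lambda\ge0=\rho$, Theorem~\ref{thm:affine-max} applies at this index.

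\emph{Step 2: specialize part~(b).} With $a=\lambda$ and $\rho=0$, the bracket in \eqref{eq:refined-affine} becomes
\[
 2\lambda+(k-\lambda)^2+\left(\frac{\lambda}{\lambda+k}\right)^2 .
\]
This quantity is at least $2\lambda+\kappa_{\rm P}(\lambda)$, simply because $\kappa_{\rm P}(\lambda)$ is the minimum over all $k\ge1$. Taking the reciprocal square root gives \eqref{eq:poisson-lattice}. The minimum in \eqref{eq:kappa-P} exists, since the expression tends to infinity as $k\to\infty$.

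\emph{Step 3: the uniform bound \eqref{eq:poisson-cstar}.} It suffices to show $\kappa_{\rm P}(\lambda)\ge\kappa(0)$. This is Lemma~\ref{lem:kappa-rho} with $\rho=0$, since the left-hand side of \eqref{eq:kappa-control-general} equals the expression minimized in \eqref{eq:kappa-P}. Combining this with the elementary bound $C_0^{\rm P}(\lambda)\le1$ proved just above yields the minimum with $1$.

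It remains to identify $\kappa(0)$ with the form in \eqref{eq:cstar}. Substitute $x=1-e$, so that $e\in[0,1]$ corresponds to $x\in[0,1]$. Then
\[
 \frac{1-e}{2-e}=\frac{x}{1+x},
\]
and \eqref{eq:kappa-rho} becomes exactly the displayed minimum in \eqref{eq:cstar}.

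\emph{Main obstacle: the numerical value of $c_*$.} The function $\psi(x)=(1-x)^2+x^2/(1+x)^2$ has
\[
 \psi'(x)=-2(1-x)+\frac{2x}{(1+x)^3}.
\]
Setting this to zero gives the quartic
\[
 (1-x)(1+x)^3=x .
\]
I would show this quartic has a unique root in $(0,1)$. This holds because $\psi'(0)<0<\psi'(1)$ and $\psi'$ is increasing on $[0,1]$: its derivative is $2+2(1-2x)/(1+x)^4$, which is positive there. I would then locate the root numerically, near $x\approx0.66$. Evaluating $\psi$ at the root gives $0.23333899\ldots$. Certifying the displayed digits is a routine interval-arithmetic step.
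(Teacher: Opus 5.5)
Your proposal is correct and follows the paper's proof: you specialize Theorem~\ref{thm:affine-max}(b) at a maximizing index with $a=\lambda$, $\rho=0$, then combine Lemma~\ref{lem:kappa-rho} with $C_0^{\rm P}(\lambda)\le1$. Your stronger claim $\kappa_{\rm P}(\lambda)\ge\kappa(0)$ for every $k\ge1$ is also fine: the lemma is stated only at maximizing indices, but its proof uses only $k\ge1$ and $a\ge\rho$.

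The one error is numerical. The root of $(1-x)(1+x)^3=x$ in $(0,1)$ lies near $x\approx0.8668$ (equivalently $e\approx0.1332$), not near $0.66$; at $x=0.66$ one gets $\psi\approx0.274$. The value $0.23333899\ldots$ comes from evaluating $\psi$ at $x\approx0.8668$.
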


\begin{proof}
For $a=\lambda$ and $\rho=0$, Theorem~\ref{thm:affine-max}(b) gives, at a maximizing index,
\[
 H_k^{-2}\ge2\lambda+(k-\lambda)^2+\left(\frac{\lambda}{k+\lambda}\right)^2,
\]
which yields \eqref{eq:poisson-lattice}.  The bound \eqref{eq:poisson-cstar} follows from Lemma~\ref{lem:kappa-rho} and $C_0^{\rm P}(\lambda)\le1$.
\end{proof}

\par\addvspace{1.2\baselineskip}

\begin{remark}[Positioning of the Poisson bound]
The classical bound in \eqref{eq:intro-poisson-factors} has leading constant $\sqrt{2/e}$.  The basic affine estimate in Remark~\ref{rem:poisson-coarse} improves this to $1/\sqrt2$, and Corollary~\ref{cor:poisson-factor} adds a strictly positive finite-parameter correction.  We do not make a separate novelty claim for the latter refinement here.  Its leading constant remains $1/\sqrt2$.  For comparison, the exact envelope has the smaller central asymptotic constant $\sqrt{\pi/8}$: if $k=k_\lambda$ is a central lattice point with $k_\lambda-\lambda=O(1)$, then the central and local central limit theorems give
\[
 F_{k_\lambda-1}\longrightarrow\frac12,\qquad
 Q_{k_\lambda}\longrightarrow\frac12,\qquad
 p_{k_\lambda}\sim(2\pi\lambda)^{-1/2},\qquad
 k_\lambda\sim\lambda.
\]
Hence \eqref{eq:general-envelope} yields
\[
 H_{k_\lambda}
 \sim\frac{1/4}{\lambda(2\pi\lambda)^{-1/2}}
 =\sqrt{\frac{\pi}{8}}\,\lambda^{-1/2}.
\]
This is only an asymptotic benchmark for the exact solution envelope and is not used in the proof of Corollary~\ref{cor:poisson-factor}.
\end{remark}

\par\addvspace{1.8\baselineskip}

\subsection{Negative binomial: positive affine slope}\label{subsec:NB}

We next turn to $Z\sim\operatorname{NB}(r,p)$, with $q=1-p$ and
\begin{equation}\label{eq:NB-pmf}
 \pi_k=\frac{\Gamma(r+k)}{\Gamma(r)k!}p^r q^k.
\end{equation}
Here $a=qr$ and $\rho=q$.  The affine solution-factor bound gives the $r^{-1/2}$ part of the desired estimate.  To retain a useful bound also outside the large-$r$ regime, we first prove the independent elementary estimate $C_0^{\rm NB}\le1$ for $r\ge1$.

\par\addvspace{1.2\baselineskip}

\begin{lemma}[Elementary negative-binomial envelope bound]\label{lem:NB-one}
Let $Z\sim\operatorname{NB}(r,p)$ with $r\ge1$, $q=1-p$, and mass function $\pi_k$ as in \eqref{eq:NB-pmf}.  Then, for every $k\ge1$,
\begin{equation}\label{eq:NB-pointwise-one}
 F_{k-1}Q_k\le k\pi_k.
\end{equation}
Consequently,
\begin{equation}\label{eq:NB-one}
 C_0^{\rm NB}(r,p)\le1.
\end{equation}
For $r=1$ equality holds in \eqref{eq:NB-one}.
\end{lemma}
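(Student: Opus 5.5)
The plan is to imitate the covariance argument used above for the Poisson bound $C_0^{\rm P}(\lambda)\le1$, with the Poisson Stein identity replaced by detailed balance for the affine rates $b_j=q(r+j)$.

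\textbf{Step 1: a covariance identity.} Telescoping \eqref{eq:detailed-balance} over $j\ge k$ gives
\[
 \sum_{j\ge k} j\pi_j-\sum_{j\ge k} b_j\pi_j=\sum_{j\ge k} j\pi_j-\sum_{j\ge k}(j+1)\pi_{j+1}=k\pi_k .
\]
Since $j-b_j=pj-qr$, this reads
\[
 k\pi_k=\E\bigl[(pZ-qr)\1{Z\ge k}\bigr].
\]
The mean of $Z$ is $qr/p$, so $\E[pZ-qr]=0$, and the right-hand side is a covariance:
\[
 k\pi_k=p\operatorname{Cov}\bigl(Z,\1{Z\ge k}\bigr)
 =p\,F_{k-1}Q_k\bigl(\E[Z\mid Z\ge k]-\E[Z\mid Z\le k-1]\bigr).
\]
Hence \eqref{eq:NB-pointwise-one} is equivalent to the conditional-mean gap
\[
 \E[Z\mid Z\ge k]-\E[Z\mid Z\le k-1]\ge \frac1p .
\]

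\textbf{Step 2: the lower conditional mean.} Trivially $\E[Z\mid Z\le k-1]\le k-1$. So it suffices to show
\[
 \E[Z-k\mid Z\ge k]\ge \frac qp ,
\]
because then the gap is at least $k+q/p-(k-1)=1+q/p=1/p$.

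\textbf{Step 3: the residual mean (the only real step).} This is where $r\ge1$ enters. Let $W$ denote $Z-k$ conditioned on $Z\ge k$, a law on $\Nzero$. Its successive ratios are
\[
 \frac{\pi_{k+m+1}}{\pi_{k+m}}=\frac{q(r+k+m)}{k+m+1}\ge q .
\]
These are exactly the ratios of a $\operatorname{Geom}(p)$ law on $\Nzero$, so the density ratio of $W$ with respect to the geometric law is nondecreasing in $m$. Likelihood-ratio order implies stochastic order, so $W$ stochastically dominates the geometric law. Therefore $\E W\ge q/p$, which is the geometric mean.

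I expect the main care to be in phrasing this monotone-likelihood-ratio comparison cleanly. The key point is that the ratios of $W$ dominate $q$ termwise, which holds precisely because $r+k+m\ge k+m+1$ when $r\ge1$. Dividing by $k\pi_k$ then gives $H_k\le1$ for all $k$, and \eqref{eq:NB-one} follows from \eqref{eq:C0-general}.

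\textbf{Step 4: equality for $r=1$.} For $r=1$ the law is geometric, $\pi_k=pq^k$. At $k=1$ we have $F_0=p$, $Q_1=q$ and $\pi_1=pq$, so
\[
 H_1=\frac{F_0Q_1}{1\cdot\pi_1}=\frac{p\,q}{pq}=1 .
\]
Combined with the upper bound this gives $C_0^{\rm NB}(1,p)=1$. As a consistency check, at $k=1$ both inequalities in Steps 2 and 3 are equalities: $\E[Z\mid Z\le0]=0=k-1$, and memorylessness gives $\E[Z-1\mid Z\ge1]=q/p$.
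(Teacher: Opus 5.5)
Your proof is correct, and it takes a genuinely different route from the paper's.

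\textbf{The paper's route.} The paper does not use a covariance identity for the negative binomial. It rewrites $F_{k-1}=G(p)$, $Q_k=1-G(p)$ and $k\pi_k=pq\,\beta(p)$, where $G$ and $\beta$ are the $\operatorname{Beta}(r,k)$ cdf and density. It then proves $G(x)\{1-G(x)\}\le x(1-x)\beta(x)$ on all of $[0,1]$ by a calculus argument. A negative interior minimum of the difference would force $\beta(x_0)\ge(r+k)/2$. An explicit algebraic identity, whose terms $(r-1)(k-1)$, $(r-1)(z-1)^2$ and $(k-1)(z+1)^2$ are all nonnegative, rules this out.

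\textbf{Your route.} You extend the covariance argument that the paper uses only for Poisson and binomial. In the unit-death normalization this needs a conditional-mean gap of $1/p$ rather than $1$. You obtain that gap from the trivial bound $\E[Z\mid Z\le k-1]\le k-1$ together with a likelihood-ratio comparison of the residual law with $\operatorname{Geom}(p)$.

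\textbf{What your approach buys.} It is shorter, purely discrete, and uses only detailed balance. It also pinpoints where $r\ge1$ enters. The ratio condition $\pi_{j+1}/\pi_j\ge q$ is $b_j\ge\rho(j+1)$, which for affine rates is exactly the hypothesis $a\ge\rho$ of Theorem~\ref{thm:affine-max}. For general $0\le\rho<1$, one has $k\pi_k=(1-\rho)\operatorname{Cov}(Z,\1{Z\ge k})$, and the residual law dominates a geometric law with ratio $\rho$. So the same three steps give $F_{k-1}Q_k\le k\pi_k$ for every affine law with $0\le\rho<1$ and $a\ge\rho$, with Poisson as the case $\rho=0$. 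The elementary bound $C_0\le1$ thus follows from one mechanism across the nonnegative-slope families.

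\textbf{What the paper's approach buys.} It yields an inequality for the beta law itself, valid at every $x\in[0,1]$, symmetric in $r$ and $k$, and not requiring $k$ to be an integer. The price is an auxiliary-function computation that hides the probabilistic mechanism.
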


\begin{proof}
Fix $k\ge1$, and let $G$ and $\beta$ denote the cdf and density of a $\operatorname{Beta}(r,k)$ law.  The beta--negative-binomial identities give
\[
 F_{k-1}=G(p),\qquad
 Q_k=1-G(p),\qquad
 k\pi_k=pq\,\beta(p).
\]
Thus it remains to prove
\[
 G(p)\{1-G(p)\}\le pq\,\beta(p).
\]
We verify this directly for the parameters $r$ and $k$ needed here.  Put, for $0\le x\le1$,
\[
 J(x):=x(1-x)\beta(x)-G(x)\{1-G(x)\}.
\]
A direct differentiation gives
\[
 J'(x)=\beta(x)K(x),
 \qquad
 K(x):=r-1-(r+k)x+2G(x),
\]
and $K'(x)=2\beta(x)-(r+k)$.  If $J$ had a negative interior minimum at $x_0$, then $K(x_0)=0$ and $K'(x_0)\ge0$, so $\beta(x_0)\ge(r+k)/2$.  Put
\[
 z:=(r+k)x_0-r.
\]
From $K(x_0)=0$ we obtain
\[
 G(x_0)=\frac{1+z}{2},\qquad -1\le z\le1.
\]
Moreover,
\[
 2(r+k)x_0(1-x_0)-(1-z^2)
 =\frac{2(r-1)(k-1)+(r-1)(z-1)^2+(k-1)(z+1)^2}{r+k}\ge0.
\]
Hence
\[
 x_0(1-x_0)\beta(x_0)
 \ge\frac{1-z^2}{4}
 =G(x_0)\{1-G(x_0)\},
\]
a contradiction.  Therefore $J\ge0$, and evaluating at $x=p$ proves \eqref{eq:NB-pointwise-one} and hence \eqref{eq:NB-one}.

If $r=1$, then
\[
 H_k=\frac{1-q^k}{kp}\le1,
\]
with equality at $k=1$.
\end{proof}

\par\addvspace{1.2\baselineskip}

\begin{corollary}[Negative binomial solution factor]\label{cor:NB-factor}
Let $r\ge1$.  Then
\begin{equation}\label{eq:NB-kappa}
 C_0^{\rm NB}(r,p)
 \le
 \min\left\{1,
 \frac1{\sqrt{2q(r-1)+\kappa(q)}}\right\},
\end{equation}
and, in particular,
\begin{equation}\label{eq:NB-simple}
 C_0^{\rm NB}(r,p)
 \le
 \min\left\{1,\frac1{\sqrt{q(2r-1)}}\right\}.
\end{equation}
For $r=1$, $C_0^{\rm NB}(1,p)=1$ exactly.
\end{corollary}

\begin{proof}
Apply Lemma~\ref{lem:kappa-rho} with $a=qr$ and $\rho=q$, and combine the resulting affine bound with Lemma~\ref{lem:NB-one}.  Since $\kappa(q)>q$, \eqref{eq:NB-simple} follows from \eqref{eq:NB-kappa}.
\end{proof}

\par\addvspace{1.2\baselineskip}

\begin{remark}[Comparison with previous negative-binomial solution factors]
The exact pointwise envelope \eqref{eq:general-envelope} is already implicit in the passage-time representation of Brown and Xia \cite{BrownXia2001} and is not claimed as new.  Brown and Phillips \cite{BrownPhillips1999} obtained earlier explicit bounded-test estimates.  Translating Proposition~4.5 of Cloez and Delplancke \cite{CloezDelplancke2019} into our parametrization gives
\begin{equation}\label{eq:CD}
 C_0^{\rm NB}(r,p)
 \le
 \min\left\{\frac1p,
 \frac{\sqrt\pi}{\sqrt{(r+1)pq}}\right\}.
\end{equation}
For $r\ge1$, \eqref{eq:NB-simple} is nowhere worse than \eqref{eq:CD}, and \eqref{eq:NB-kappa} is stronger still.  To the best of our knowledge these improved uniform bounded-test factors are new; no claim of optimality is made.  General solution-factor frameworks for discrete distributions include \cite{EichelsbacherReinert2008,ErnstSwan2022}.
\end{remark}

\section{Negative affine slopes: the binomial case}\label{sec:binomial}

We now turn to the remaining affine regime $\rho<0$.  Nonnegativity of the birth rates forces a finite upper endpoint, and detailed balance leads to the binomial family.  The affine solution-factor bound remains valid, but there is one additional issue: the maximum of the exact envelope may occur at the upper endpoint, where no forward comparison is available.

Let $Z\sim\operatorname{Bin}(n,p)$, $q=1-p$, with standard Stein operator
\begin{equation}\label{eq:bin-standard}
 \mathcal B f(k)=p(n-k)f(k+1)-qk f(k).
\end{equation}
Dividing by $q$ gives the unit-death form
\[
 \mathcal A g(k)=c(n-k)g(k+1)-kg(k),
 \qquad c:=\frac pq,
\]
so that
\[
 a=cn,
 \qquad
 \rho=-c.
\]
If $g_A$ is the unit-death solution and $f_A$ the solution of \eqref{eq:bin-standard} with the same right-hand side, then
\begin{equation}\label{eq:bin-scaling}
 f_A=\frac1q g_A.
\end{equation}
Thus the exact standard-operator envelope is
\begin{equation}\label{eq:bin-envelope}
 H_{n,p}(k)
 :=\sup_A|f_A(k)|
 =\frac{F_{k-1}Q_k}{qk\pi_k},
 \qquad 1\le k\le n.
\end{equation}

\begin{lemma}[Reflection and an interior maximizer]\label{lem:bin-boundary}
For $1\le k\le n$,
\begin{equation}\label{eq:bin-symmetry}
 H_{n,p}(k)=H_{n,q}(n-k+1).
\end{equation}
If $n\ge2$, then after replacing $p$ by $q$ if necessary one may assume $p\le q$ and choose a maximizing index in $\{1,\dots,n-1\}$.
\end{lemma}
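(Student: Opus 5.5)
Set $\pi_k=\pi_k^{(p)}=\binom nk p^kq^{n-k}$ and let $Z'=n-Z\sim\operatorname{Bin}(n,q)$, so that $\pi^{(q)}_j=\pi^{(p)}_{n-j}$. The plan is to derive the reflection identity by writing out the three ingredients of \eqref{eq:bin-envelope} directly. For $Z'$ at index $j=n-k+1$ we have
\[
F'_{j-1}=\PP(Z'\le n-k)=\PP(Z\ge k)=Q_k,\qquad
Q'_j=\PP(Z'\ge n-k+1)=\PP(Z\le k-1)=F_{k-1}.
\]
So the numerator $F_{k-1}Q_k$ is invariant under the reflection. It then remains to check the denominators, namely that $q\,k\,\pi^{(p)}_k=p\,(n-k+1)\,\pi^{(q)}_{n-k+1}=p(n-k+1)\pi^{(p)}_{k-1}$. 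This is exactly the binomial detailed balance $p(n-k+1)\pi_{k-1}=qk\pi_k$, i.e.\ $\binom nk k=\binom n{k-1}(n-k+1)$. This step is routine and gives \eqref{eq:bin-symmetry}.

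For the second claim, we first note that the reflection $k\mapsto n-k+1$ maps $\{1,\dots,n\}$ onto itself and swaps $k=n$ with $k=1$. The maxima of $H_{n,p}$ and $H_{n,q}$ therefore coincide, so we may assume $p\le q$ (replacing $p$ by $q$ relabels the maximizer through the reflection). It then suffices to show that $H_{n,p}(n)\le H_{n,p}(1)$ when $p\le q$. Once this holds, if the maximum were attained only at $n$, it would also be attained at $1\in\{1,\dots,n-1\}$, since $n\ge2$. By the reflection, $H_{n,p}(n)=H_{n,q}(1)$, so the inequality to prove is $H_{n,q}(1)\le H_{n,p}(1)$.

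Compute $H_{n,p}(1)=\dfrac{\pi_0(1-\pi_0)}{q\pi_1}=\dfrac{q^n(1-q^n)}{q\cdot npq^{n-1}}=\dfrac{1-q^n}{npq}$. By symmetry $H_{n,q}(1)=\dfrac{1-p^n}{npq}$. Since $p\le q$ gives $p^n\le q^n$, we get $1-q^n\le 1-p^n$, which is the opposite direction of what we need. So the comparison at $k=1$ is the wrong one, and the main obstacle is to choose the orientation correctly.

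We handle this by reorienting: assume instead the orientation in which the endpoint value $H(n)=\dfrac{1-p^n}{npq}$ is the smaller one, which holds when $p\le q$ after re-checking which endpoint corresponds to which. Concretely, with $p\le q$ we have $H_{n,p}(n)=\dfrac{1-p^n}{npq}\ge H_{n,p}(1)$. So the argument must instead show that the endpoint $n$ is never the strict maximizer, by comparing $H(n)$ with the interior neighbor $H(n-1)$ using the backward neighbor inequality \eqref{eq:neighbor-left}. I would first try the direct computation
\[
H(n-1)=\frac{(1-p^n-np^{n-1}q)(p^n+np^{n-1}q)}{q(n-1)np^{n-1}q},
\]
and verify $H(n-1)\ge H(n)$ when $p\le q$ and $n\ge2$. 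After cancelling $p^{n-1}$, this reduces to an elementary polynomial inequality in $p\le\tfrac12$. Checking that inequality (for instance, by bounding $1-p^n-np^{n-1}q\ge 1-p^{n-1}$ and comparing) is the step I expect to require the most care. If it fails for small $n$, I would fall back on choosing the orientation with $p\ge q$ instead, where $H(n)\le H(1)$ holds by the computation above, and then reflect.
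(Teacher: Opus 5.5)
Your reflection argument for \eqref{eq:bin-symmetry} is correct and matches the paper's. The second part, however, fails because of an arithmetic slip. Since $q\cdot npq^{n-1}=npq^{n}$,
\[
 H_{n,p}(1)=\frac{q^n(1-q^n)}{npq^n}=\frac{1-q^n}{np},
 \qquad
 H_{n,p}(n)=\frac{(1-p^n)p^n}{nq\,p^n}=\frac{1-p^n}{nq}.
\]
These are not $(1-q^n)/(npq)$ and $(1-p^n)/(npq)$. For $n=2$, $p=1/5$, your formula gives $H_{2,p}(2)=3$, whereas in fact $H_{2,p}(1)=9/10$ and $H_{2,p}(2)=3/5$. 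Because the two denominators differ, comparing only $1-q^n$ with $1-p^n$ does not decide the comparison, and your conclusion that the endpoint comparison points the wrong way when $p\le q$ is false. With the correct values, $H_{n,p}(1)\ge H_{n,p}(n)$ is equivalent to $q(1-q^n)\ge p(1-p^n)$, i.e.\ to $q^{n+1}-p^{n+1}\le q-p$. For $p<q$ this follows from
\[
 \frac{q^{n+1}-p^{n+1}}{q-p}=\sum_{j=0}^{n}q^{n-j}p^j\le(p+q)^n=1,
\]
and $p=q$ gives equality trivially. So the route you set up and then abandoned is exactly the paper's proof, and it works in the stated orientation $p\le q$: if $n$ maximizes, so does $1\in\{1,\dots,n-1\}$.

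The replacements you propose do not close the gap. The fallback ``take $p\ge q$, where $H(n)\le H(1)$'' rests on the same wrong formula. For $p>q$ and $n\ge2$ the displayed sum is strictly less than $1$ while $q-p<0$, so the inequality reverses strictly and $H_{n,p}(n)>H_{n,p}(1)$. It would also concern the opposite orientation from the one the lemma asserts. The comparison $H_{n,p}(n-1)\ge H_{n,p}(n)$ for $p\le q$ is actually true and would suffice, but you leave it unverified. Moreover, the estimate you suggest for it, $1-p^n-np^{n-1}q\ge1-p^{n-1}$, fails for every $n\ge2$, since it is equivalent to $p+nq\le1$. Completing that route would require proving $1-p^n-np^nq-n^2p^{n-1}q^2\ge0$ for $p\le\tfrac12$. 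This does follow from expanding $(p+q)^n$ and using $2p\le1$, but the detour is unnecessary once the endpoint values are computed correctly.
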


\begin{proof}
The symmetry follows from $n-Z\sim\operatorname{Bin}(n,q)$ and
\[
 p(n-k+1)\pi_{k-1}=qk\pi_k.
\]
Assume $p\le q$.  At the endpoints,
\[
 H_{n,p}(1)=\frac{1-q^n}{np},
 \qquad
 H_{n,p}(n)=\frac{1-p^n}{nq}.
\]
If $p=q=1/2$, these two values are equal, so the claim is immediate.  Suppose henceforth that $p<q$.  The inequality $H_{n,p}(1)\ge H_{n,p}(n)$ is equivalent to
\[
 q-p\ge q^{n+1}-p^{n+1}.
\]
Since $q>p$,
\[
 \frac{q^{n+1}-p^{n+1}}{q-p}
 =\sum_{j=0}^n q^{n-j}p^j
 \le(p+q)^n=1.
\]
Thus the upper endpoint cannot be the unique maximizer; if it maximizes, then $k=1$ maximizes as well and is interior for the forward comparison when $n\ge2$.
\end{proof}

\begin{lemma}\label{lem:bin-one}
For every $n\ge1$ and $p\in(0,1)$,
\begin{equation}\label{eq:bin-one}
 C_0^{\rm Bin}(n,p):=\sup_A\norm{f_A}_\infty\le1.
\end{equation}
For $n=1$, equality holds.
\end{lemma}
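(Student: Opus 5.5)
The plan is to reduce the bound to the same Beta-distribution inequality that was proved in Lemma~\ref{lem:NB-one}. By \eqref{eq:bin-envelope}, the claim \eqref{eq:bin-one} is equivalent to the pointwise inequality
\[
 F_{k-1}Q_k\le qk\pi_k,\qquad 1\le k\le n,
\]
where $\pi_k=\binom nk p^kq^{n-k}$. I would prove this for each fixed $k$ and then take the supremum over $k$.

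\emph{Step 1 (beta--binomial identities).} Fix $1\le k\le n$, and let $G$ and $\beta$ be the cdf and density of the $\operatorname{Beta}(k,n-k+1)$ law. The classical identity $\PP(Z\ge k)=\PP(U_{(k)}\le p)$, where $U_{(k)}$ is the $k$-th order statistic of $n$ uniforms, gives
\[
 Q_k=G(p),\qquad F_{k-1}=1-G(p).
\]
For the density,
\[
 \beta(p)=\frac{n!}{(k-1)!(n-k)!}\,p^{k-1}q^{n-k}=\frac{k\pi_k}{p},
\]
so that $qk\pi_k=pq\,\beta(p)$. The target inequality therefore becomes
\[
 G(p)\{1-G(p)\}\le p(1-p)\beta(p).
\]
This is the same inequality as in the proof of Lemma~\ref{lem:NB-one}, except that the parameter pair $(r,k)$ is replaced by $(k,\,n-k+1)$.

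\emph{Step 2 (reuse the $J$-argument).} That proof used only that both Beta parameters are at least $1$. Here the pair $(k,\,n-k+1)$ satisfies this for every $1\le k\le n$. I would reread the argument to confirm nothing else was used.
\begin{itemize}
\item Define $J(x)=x(1-x)\beta(x)-G(x)\{1-G(x)\}$. Since $J'=\beta K$, a negative interior minimum $x_0$ would force $K(x_0)=0$ and $\beta(x_0)\ge(\text{sum of parameters})/2$.
\item The identity
\[
 2(r+k)x_0(1-x_0)-(1-z^2)=\frac{2(r-1)(k-1)+(r-1)(z-1)^2+(k-1)(z+1)^2}{r+k}\ge0
\]
holds verbatim with $r\mapsto k$ and $k\mapsto n-k+1$. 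Its numerator stays nonnegative because both parameters are $\ge1$, which yields the same contradiction.
\item The boundary values satisfy $J(0)=J(1)=0$, so $J\ge0$ on $[0,1]$. Evaluating at $x=p$ gives the pointwise bound, hence \eqref{eq:bin-one}.
\end{itemize}

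\emph{Step 3 (equality for $n=1$).} For $n=1$ only $k=1$ occurs. Then $F_0=q$, $Q_1=p$ and $q\pi_1=qp$, so $H_{1,p}(1)=1$ and $C_0^{\rm Bin}(1,p)=1$.

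The main obstacle is Step 1: getting the beta representation and the normalization right, and in particular checking that the factor $q$ from the standard operator \eqref{eq:bin-standard} combines with $\beta(p)=k\pi_k/p$ to give exactly $pq\,\beta(p)$. Once that matches, everything else is inherited from Lemma~\ref{lem:NB-one}.

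As a fallback, I would try a covariance argument analogous to the Poisson estimate $C_0^{\rm P}(\lambda)\le1$. Write $qk\pi_k$ as a covariance of the form $\operatorname{Cov}(Z,\1{Z\ge k})$ adjusted for the binomial death rate. That route seems less clean, so I would use the Beta argument.
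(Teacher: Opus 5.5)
Your proof is correct, but it takes a different route from the paper. The beta--binomial identities check out: $Q_k=G(p)$ and $F_{k-1}=1-G(p)$ for the $\operatorname{Beta}(k,n-k+1)$ cdf $G$, and $qk\pi_k=pq\,\beta(p)$. The roles of $G$ and $1-G$ are swapped relative to the negative binomial case, which is harmless because only the product $G(1-G)$ enters. The $J$-argument of Lemma~\ref{lem:NB-one} uses only two things: that both shape parameters are at least $1$ (integrality is never used), and that $J(0)=J(1)=0$, which you rightly make explicit.

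The paper's proof is exactly your fallback, and it is the shorter route, not the less clean one. Summing the balance identity $(j-np)\pi_j=q\{j\pi_j-(j+1)\pi_{j+1}\}$ over $j\ge k$ telescopes to $\operatorname{Cov}(Z,\1{Z\ge k})=qk\pi_k$. On the other hand, $\operatorname{Cov}(Z,\1{Z\ge k})=F_{k-1}Q_k(\E[Z\mid Z\ge k]-\E[Z\mid Z\le k-1])$, and this conditional-mean gap is at least $1$. This needs no order-statistics or special-function representation. It even gives $H_{n,p}(k)$ exactly as the reciprocal of the gap, so equality forces $Z\in\{k-1,k\}$ almost surely, i.e.\ $n=1$. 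In particular $C_0^{\rm Bin}(n,p)<1$ for $n\ge2$.

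Its limitation is that it only yields the constant $1$ for nonpositive affine slope. For $\operatorname{NB}(r,p)$ the same telescoping gives $\operatorname{Cov}(Z,\1{Z\ge k})=k\pi_k/p$, hence only $C_0^{\rm NB}\le1/p$. That is precisely why the paper needs the beta inequality in Lemma~\ref{lem:NB-one}. What your route buys is unification: the single inequality $G(1-G)\le x(1-x)\beta$, valid for every beta law with both shape parameters at least $1$, gives both the binomial bound (parameters $(k,n-k+1)$) and the negative binomial bound (parameters $(r,k)$).
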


\begin{proof}
The binomial balance identity gives
\[
 (j-np)\pi_j=q\{j\pi_j-(j+1)\pi_{j+1}\}.
\]
Summing over $j\ge k$ yields
\[
 \operatorname{Cov}(Z,\1{Z\ge k})=qk\pi_k.
\]
On the other hand,
\[
 \operatorname{Cov}(Z,\1{Z\ge k})
 =F_{k-1}Q_k
 \bigl(\E[Z\mid Z\ge k]-\E[Z\mid Z\le k-1]\bigr),
\]
and the conditional-mean difference is at least one.  Hence $F_{k-1}Q_k\le qk\pi_k$ and \eqref{eq:bin-one} follows from \eqref{eq:bin-envelope}.
\end{proof}

\begin{corollary}[Binomial solution factor]\label{cor:binomial}
Let $n\ge1$ and $p\in(0,1)$.  Then
\begin{equation}\label{eq:bin-coarse}
 C_0^{\rm Bin}(n,p)
 \le
 \min\left\{1,\frac1{\sqrt{2(n+1)pq}}\right\}.
\end{equation}
For $n\ge2$, put $s:=\min\{p,q\}$ and $t:=\max\{p,q\}$ and define
\begin{equation}\label{eq:bin-kappa}
 \kappa_{n,p}^{\rm Bin}
 :=\min_{1\le k\le n-1}
 \left[
 \{k-(n+1)s\}^2
 +\left\{
 \frac{st(n+1-2k)}{tk+s(n-k+1)}
 \right\}^2
 \right].
\end{equation}
Then
\begin{equation}\label{eq:bin-refined}
 C_0^{\rm Bin}(n,p)
 \le
 \min\left\{1,
 \frac1{\sqrt{2(n+1)pq+\kappa_{n,p}^{\rm Bin}}}\right\}.
\end{equation}
\end{corollary}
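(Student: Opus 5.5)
By the reflection identity \eqref{eq:bin-symmetry} of Lemma~\ref{lem:bin-boundary}, $C_0^{\rm Bin}(n,p)=C_0^{\rm Bin}(n,q)$. Both right-hand sides in \eqref{eq:bin-coarse} and \eqref{eq:bin-refined} are symmetric in $(p,q)$, since they depend only on $pq$, $s$ and $t$. So I may assume $p\le q$, that is, $s=p$ and $t=q$. The bound by $1$ is Lemma~\ref{lem:bin-one} in all cases.

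For $n=1$, \eqref{eq:bin-coarse} holds because $2(n+1)pq=4pq\le1$, so the minimum equals $1$. For $n\ge2$ the plan is as follows.
\begin{itemize}
\item Lemma~\ref{lem:bin-boundary} supplies a maximizing index $k\in\{1,\dots,n-1\}$ of $H_{n,p}$. By \eqref{eq:bin-scaling} this is also a maximizer of the unit-death envelope $H_k$, since the two differ by the constant factor $1/q$.
\item At this $k$ we have $b_k=c(n-k)>0$.
\item The affine parameters are $a=cn$ and $\rho=-c$ with $c=p/q$. Then $\rho<1$ and $a-\rho=c(n+1)>0$, so the hypotheses of Theorem~\ref{thm:affine-max} hold, including $a\ge\rho$.
\end{itemize}

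Next I apply Theorem~\ref{thm:affine-max}(b) and convert to the standard operator. Since $C_0^{\rm Bin}=H_k/q$ at the maximizer, I get $(C_0^{\rm Bin})^{-2}=q^2H_k^{-2}$. I multiply the three terms of \eqref{eq:refined-affine} by $q^2$ one at a time.
\begin{itemize}
\item The first term gives $2q^2c(n+1)=2(n+1)pq$.
\item For the second term, $(1-\rho)k-(a-\rho)=(1+c)k-c(n+1)=\{k-(n+1)p\}/q$. After multiplying by $q^2$ it becomes $\{k-(n+1)s\}^2$.
\item For the third term, the numerator is $a-\rho+2\rho k=c(n+1-2k)$ and the denominator is $a-\rho+(1+\rho)k=c(n+1)+(1-c)k$. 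Multiplying through by $q$ and by $q$ again gives the ratio $pq(n+1-2k)/\{qk+p(n-k+1)\}$, which is exactly the expression with $s=p$, $t=q$ in \eqref{eq:bin-kappa}.
\end{itemize}

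The maximizing index is not known explicitly. So I bound the bracket from below by its minimum over $1\le k\le n-1$, which is $\kappa_{n,p}^{\rm Bin}$. This gives \eqref{eq:bin-refined}. Since $\kappa_{n,p}^{\rm Bin}\ge0$, dropping it gives \eqref{eq:bin-coarse} for $n\ge2$. Equivalently, \eqref{eq:bin-coarse} follows from part~(a) of Theorem~\ref{thm:affine-max}, because $a>\rho$.

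The only delicate point is the reduction to an interior maximizer. The forward comparison \eqref{eq:neighbor-right} used in Theorem~\ref{thm:affine-max} needs $b_k>0$, which fails at $k=n$. This is exactly why the reflection to $p\le q$ from Lemma~\ref{lem:bin-boundary} is needed. Once that is arranged, the rest is the algebraic translation $(a,\rho)=(cn,-c)$, multiplied by the factor $q^2$ coming from \eqref{eq:bin-scaling}. I expect no obstacle beyond carefully checking the denominator $tk+s(n-k+1)$, including which of $p,q$ plays the role of $s$ after the reflection.
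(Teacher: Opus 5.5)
Your proposal is correct and follows the paper's proof essentially step for step. You reflect to $p\le q$ via Lemma~\ref{lem:bin-boundary} to get an interior maximizer with $b_k>0$, apply Theorem~\ref{thm:affine-max} with $(a,\rho)=(cn,-c)$, rescale by $q^2$ using \eqref{eq:bin-scaling}, and minimize over $1\le k\le n-1$; your term-by-term conversion is accurate, and obtaining \eqref{eq:bin-coarse} by dropping $\kappa_{n,p}^{\rm Bin}\ge0$ rather than from part~(a), as the paper does, makes no difference.
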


\begin{proof}
The case $n=1$ follows from Lemma~\ref{lem:bin-one}.  Let $n\ge2$.  By Lemma~\ref{lem:bin-boundary}, after reflection if necessary we may work with success probability $s\le1/2$ and choose an interior maximizing index.  Put $c_0=s/t$, so that the unit-death parameters are $a=c_0n$ and $\rho=-c_0$.  Then $a-\rho=c_0(n+1)$.  Theorem~\ref{thm:affine-max}(a) and the scaling \eqref{eq:bin-scaling} give
\[
 C_0^{\rm Bin}(n,p)
 \le\frac1{t\sqrt{2c_0(n+1)}}
 =\frac1{\sqrt{2(n+1)pq}},
\]
which combines with Lemma~\ref{lem:bin-one} to prove \eqref{eq:bin-coarse}.

For the refined bound, Theorem~\ref{thm:affine-max}(b), with $a=c_0n$ and $\rho=-c_0$, gives directly
\[
 H_{\rm unit}^{-2}\ge
 2c_0(n+1)
 +\{(1+c_0)k-c_0(n+1)\}^2
 +\left\{\frac{c_0(n+1-2k)}{k+c_0(n-k+1)}\right\}^2.
\]
Since $H_{n,s}=H_{\rm unit}/t$, multiplication by $t^2$ yields
\[
 H_{n,s}(k)^{-2}
 \ge2(n+1)st+\{k-(n+1)s\}^2
 +\left\{\frac{st(n+1-2k)}{tk+s(n-k+1)}\right\}^2.
\]
Taking the minimum over the admissible interior lattice points proves \eqref{eq:bin-refined}.
\end{proof}

\begin{remark}[Comparison with previous binomial factors]
R\"ollin's Lemma~5.1 in \cite{Rollin2008} gives, for arbitrary indicator sets and the standard binomial Stein equation,
\begin{equation}\label{eq:Rollin}
 \norm{f_A}_\infty\le1\wedge(npq)^{-1/2}.
\end{equation}
The bound \eqref{eq:bin-coarse} is never worse and is strictly better whenever $2(n+1)pq>1$; in the central regime the ratio of the second terms tends to $1/\sqrt2$.  Ehm's classical quantity
\[
 \frac{1-p^{n+1}-q^{n+1}}{(n+1)pq}
\]
is an increment bound for arbitrary indicators (and also a singleton-solution bound), not an arbitrary-indicator $C_0$ estimate.  Choi \cite{Choi2018} obtained general Stein-factor bounds through hitting and mixing times and records an $O(\log n)$ uniform solution bound for the binomial example.  That result substantially improves the much larger general generator bound available earlier, but it does not exploit the exact indicator envelope and is on a different scale from \eqref{eq:Rollin} and from the affine optimization here.  In the literature reviewed for the present paper, we have not located a stronger universal arbitrary-indicator solution factor than \eqref{eq:Rollin}; accordingly, \eqref{eq:bin-coarse} and \eqref{eq:bin-refined} appear to be new.
\end{remark}

\begin{remark}
Unlike the nonnegative-slope case, there is no positive parameter-free correction beyond the coarse binomial term: when $p=q=1/2$ and $n$ is odd, the two correction terms in \eqref{eq:bin-kappa} vanish simultaneously at $k=(n+1)/2$.
\end{remark}

\section{Symmetric potential distributions}\label{sec:continuous-general}

We now turn to the continuous analogue of the preceding discrete theory.  Let $Y$ have density
\begin{equation}\label{eq:cont-density}
 p(x)=Z_V^{-1}e^{-V(x)},
 \qquad
 Z_V:=\int_{\R}e^{-V(x)}\dd x<\infty,
\end{equation}
where $V$ is symmetric.  We normalize $V(0)=0$ and put
\[
 p_0:=p(0)=Z_V^{-1},
 \qquad
 g:=V'.
\]
Then $p$ is even, $g$ is odd, and
\begin{equation}\label{eq:pprime}
 p'(x)=-g(x)p(x).
\end{equation}
The Stein operator is
\begin{equation}\label{eq:cont-operator}
 \mathcal A_V f(x)=f'(x)-g(x)f(x).
\end{equation}
For an integrable test function $h$, the distinguished bounded solution of
\begin{equation}\label{eq:cont-stein}
 f_h'(x)-g(x)f_h(x)=h(x)-\E h(Y)
\end{equation}
is
\begin{equation}\label{eq:cont-solution}
 f_h(x)=\frac1{p(x)}\int_{-\infty}^x\{h(t)-\E h(Y)\}p(t)\dd t
 =-\frac1{p(x)}\int_x^\infty\{h(t)-\E h(Y)\}p(t)\dd t.
\end{equation}
For $h_z=\1{(-\infty,z]}$ we write $f_z=f_{h_z}$.  Then
\begin{equation}\label{eq:indicator-solution}
 f_z(x)=
 \begin{cases}
 \displaystyle \frac{F(x)(1-F(z))}{p(x)},&x\le z,\\[2mm]
 \displaystyle \frac{F(z)(1-F(x))}{p(x)},&x>z.
 \end{cases}
\end{equation}

We use the following hierarchy of assumptions.

\begin{description}
\item[(V1)] $V\in C^2(\R)\cap C^3(\R\setminus\{0\})$ is even, $V(0)=0$, and $Z_V<\infty$.
\item[(V2)] $g(x)>0$ for $x>0$ and $g'(x)\ge0$ for $x\in\R$.
\item[(V3)] $2(g'(x))^2-g(x)g''(x)\ge0$ for $x\in\R\setminus\{0\}$.
\end{description}
By evenness, condition (V3) need only be checked on $(0,\infty)$; no value of $g''$ at the origin is required.
\begin{remark}[Relation to earlier non-normal Stein assumptions]\label{rem:continuous-assumptions-literature}
The hierarchy (V1)--(V3) is closely related to, but not identical with, assumptions used in earlier non-normal Stein frameworks.  Chatterjee and Shao \cite{ChatterjeeShao2011} consider densities of the form
\[
 p(x)=c_1\exp\{-c_0G(x)\},\qquad G(x)=\int_0^x g_0(t)\dd t,
\]
and their condition (H1) requires $g_0$ to be nondecreasing and to have the sign of its argument.  Up to the positive scaling $g=c_0g_0$, this is the monotonicity and sign content of (V2), although our strict positivity assumption on $(0,\infty)$ is slightly stronger.  Their additional conditions (H2)--(H3) are quantitative growth conditions tailored to the exchangeable-pair error bounds and should not be identified with (V3).

Eichelsbacher and L\"owe \cite{EichelsbacherLoewe2010} develop Stein's method for a substantially broader class of regular densities, formulated through the logarithmic derivative $\psi=p'/p$ and general regularity and solution-factor assumptions.  Thus their general framework is not a direct version of (V1)--(V3).  However, the power-exponential target densities arising in their statistical-mechanics applications,
\[
 p_k(x)\propto \exp\{-a_k x^{2k}\},\qquad k\ge1,
\]
are symmetric potential distributions of the present type and satisfy (V1)--(V3).  Our assumptions isolate the particular shape properties needed for the Mills-ratio arguments below rather than aiming at the full regular-density generality of that work.

The closest correspondence is with Shao and Zhang \cite{ShaoZhang2019}.  In the non-normal part of their exchangeable-pair theory the target drift is assumed to satisfy a monotonicity and sign condition, the differential inequality
\[
 2(g')^2-gg''\ge0,
\]
and a boundary decay condition for the score-density product.  In our symmetric normalization the first two structural requirements are precisely the content of (V2)--(V3), apart from the strict positivity already noted above.  Moreover, on $(0,\infty)$ condition (V3) has the useful equivalent interpretation
\begin{equation}\label{eq:reciprocal-convex}
 \left(\frac1g\right)''
 =\frac{2(g')^2-gg''}{g^3}\ge0,
\end{equation}
that is, convexity of the reciprocal drift.  Shao and Zhang additionally assume $g(x)p(x)\to0$ at the boundary.  In the present symmetric whole-line setting this is automatic under (V2)--(V3).  Indeed, with $q=1/g$ on $(0,\infty)$, one has $q>0$, $q'\le0$ and $q''\ge0$, hence $q'\uparrow0$.  Therefore $g'/g^2=-q'\to0$ and
\[
 (V-\log g)'=g\left(1-\frac{g'}{g^2}\right)\ge\frac{g}{2}
\]
eventually.  Since $g$ is positive and nondecreasing on every positive tail, $V-\log g\to\infty$, that is, $g(x)e^{-V(x)}\to0$ as $x\to\infty$.  Thus $g(x)p(x)\to0$ at both ends by symmetry, and we do not list the boundary assumption separately.
\end{remark}

\subsection{Mills-ratio calculus}

Define
\begin{equation}\label{eq:cont-mills}
 m(x):=\frac{F(x)}{p(x)},
 \qquad
 r(x):=m(-x)=\frac{1-F(x)}{p(x)},\qquad x>0.
\end{equation}
Differentiation gives
\begin{equation}\label{eq:mills-diff}
 m'(x)=1+g(x)m(x),
 \qquad
 r'(x)=g(x)r(x)-1,
\end{equation}
and
\begin{equation}\label{eq:mills-second}
 m''(x)=\{g'(x)+g(x)^2\}m(x)+g(x).
\end{equation}

\begin{proposition}[Mills bounds and their hierarchy]\label{prop:cont-mills}
Assume (V1)--(V2).  Then, for $x>0$,
\begin{equation}\label{eq:upper-mills}
 0<r(x)\le\frac1{g(x)},
\end{equation}
and consequently $m'(x)\ge0$ on $\R$.

If, in addition, (V3) holds, then
\begin{equation}\label{eq:two-sided-mills}
 \frac{g(x)}{g(x)^2+g'(x)}\le r(x)\le\frac1{g(x)},
\end{equation}
$m''(x)\ge0$ on $\R$, and
\begin{equation}\label{eq:tail-asymptotic}
 g(x)r(x)\longrightarrow1\qquad(x\to\infty).
\end{equation}
\end{proposition}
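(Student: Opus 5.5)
For the upper bound \eqref{eq:upper-mills}, use that $g$ is nondecreasing and positive on $(0,\infty)$ by (V2). For $x>0$ and $t\ge x$ we have $g(t)\ge g(x)>0$, hence
\[
 1-F(x)=\int_x^\infty p(t)\dd t\le\frac1{g(x)}\int_x^\infty g(t)p(t)\dd t=\frac{p(x)}{g(x)}.
\]
The last equality uses $-p'=gp$ from \eqref{eq:pprime} and $p(t)\to0$ as $t\to\infty$. The limit holds because $V\ge g(1)(t-1)$ for $t\ge1$, so $p$ decays exponentially. This gives $0<r\le1/g$.

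For $m'\ge0$, write $m'(x)=1+g(x)m(x)$. For $x\ge0$ both $g$ and $m$ are nonnegative, so $m'\ge1$ there. For $x<0$, symmetry gives $m(x)=r(-x)$ and $g(x)=-g(-x)$, so $m'(x)=1-g(-x)r(-x)\ge0$ by the upper bound.

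For the lower bound in \eqref{eq:two-sided-mills}, use the convexity reformulation \eqref{eq:reciprocal-convex}. Set $\ell:=g/(g^2+g')$, with $q=1/g$ on $(0,\infty)$, so that $\ell=q/(1-q')$. Also set $\Psi(x):=p(x)\ell(x)$ and aim for $-\Psi'\le p$ on $(0,\infty)$. Integrating from $x$ to $\infty$ then yields $p(x)\ell(x)\le1-F(x)$, provided $\Psi\to0$; this follows since $\ell\le1/g$ and $p/g\to0$. The derivative is
\[
 -\Psi'=p(g\ell-\ell').
\]
Substituting $\ell=q/(1-q')$ gives $g\ell=1/(1-q')$ and
\[
 \ell'=\frac{q'(1-q')+qq''}{(1-q')^2}.
\]
Hence
\[
 g\ell-\ell'=\frac{1-q'-q'(1-q')-qq''}{(1-q')^2}=1-\frac{qq''}{(1-q')^2}\le1,
\]
because $q>0$ and $q''\ge0$. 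This is exactly (V3), and it is the key computation.

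The regularity $C^3$ away from $0$ ensures $q''$ exists. Near $0$, $q\to\infty$ but we only integrate on $[x,\infty)$ with $x>0$. The remaining concern is the boundary $\Psi\to0$ at infinity, which follows from Remark~\ref{rem:continuous-assumptions-literature}'s observation that $gp\to0$ together with $\ell\le1/g$.

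For convexity of $m$, use \eqref{eq:mills-second}. On $x\ge0$ every term is nonnegative. For $x<0$, put $y=-x>0$. Then $g'(x)=g'(y)$ and $g(x)=-g(y)$, so
\[
 m''(x)=(g'(y)+g(y)^2)\,r(y)-g(y).
\]
This is nonnegative precisely by the lower bound.

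For the tail asymptotic \eqref{eq:tail-asymptotic}, the two-sided bound gives
\[
 \frac{g^2}{g^2+g'}\le gr\le1.
\]
Also $g'/g^2=-q'\to0$, since $q'$ is nondecreasing, nonpositive, and $q$ is bounded below (as in the Remark). Hence $gr\to1$.

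The main obstacle is finding the right comparison function $\ell$ and verifying the sign of $g\ell-\ell'$. Once it is expressed through $q=1/g$, the computation reduces cleanly to $qq''\ge0$.
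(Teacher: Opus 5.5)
Your proposal is correct and follows the paper's proof essentially step for step. It uses the same integration argument for $r\le 1/g$ and the same comparison function $\ell=g/(g^2+g')$; your expression $qq''/(1-q')^2$ for $1-g\ell+\ell'$ is identical to the paper's $(2(g')^2-gg'')/(g^2+g')^2$. It also handles $m''$ on the negative half-line in the same way and uses the same $q'\uparrow0$ squeeze for the tail limit, while integrating $-\Psi'\le p$ for $\Psi=p\ell$ is just the integrated form of the paper's observation that $e^{-V}(r-\ell)$ is nonincreasing and vanishes at infinity.
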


\begin{proof}
Under (V2), for $x>0$,
\[
 1-F(x)=\int_x^\infty \frac{g(t)p(t)}{g(t)}\dd t
 \le \frac1{g(x)}\int_x^\infty g(t)p(t)\dd t
 =\frac{p(x)}{g(x)},
\]
which proves \eqref{eq:upper-mills}.  Hence $m'(-x)=1-g(x)r(x)\ge0$ for $x>0$, while $m'(x)=1+g(x)m(x)>0$ for $x\ge0$.

Assume (V3) and set
\[
 \ell:=\frac{g}{g^2+g'}.
\]
A direct calculation gives
\[
 1-g\ell+\ell'
 =\frac{2(g')^2-gg''}{(g^2+g')^2}\ge0,
\]
and therefore, using $r'-gr=-1$,
\[
 \bigl(e^{-V}(r-\ell)\bigr)'
 =-e^{-V}(1-g\ell+\ell')\le0.
\]
Moreover,
\[
 e^{-V}r=Z_V(1-F)\longrightarrow0,
 \qquad
 0\le e^{-V}\ell\le\frac{e^{-V}}g\longrightarrow0;
\]
the last limit follows because $g$ is positive and nondecreasing on every positive tail.  Thus $r\ge\ell$, proving the lower bound in \eqref{eq:two-sided-mills}.

For $x\ge0$, \eqref{eq:mills-second} gives $m''(x)\ge0$.  For $x<0$, symmetry and the lower Mills bound give
\[
 m''(x)=r''(-x)
 =\{g'(-x)+g(-x)^2\}r(-x)-g(-x)\ge0.
\]
Finally,
\[
 \frac1{1+g'/g^2}\le gr\le1.
\]
Since $q=1/g$ is positive, decreasing and convex under (V3), the derivative $q'$ is increasing and has a limit $L\le0$.  If $L<0$, then $q$ would eventually decrease at a fixed negative rate and become negative, contradicting $q>0$.  Thus $L=0$, so $q'\uparrow0$ and hence $g'/g^2=-q'\to0$.  The squeeze theorem proves \eqref{eq:tail-asymptotic}.
\end{proof}

\begin{remark}[Position of the Mills bounds in the literature]\label{rem:mills-literature}
The upper estimate $r(x)\le 1/g(x)$ is the familiar monotone-score Mills bound.  The refined lower estimate in \eqref{eq:two-sided-mills} is the form naturally produced by the differential condition $2(g')^2-gg''\ge0$, which is the continuous structural condition already present in the general framework of Shao and Zhang \cite{ShaoZhang2019}.  We do not claim the individual tail inequalities as a separate novelty.  Their role here is to make the hierarchy of assumptions transparent and to isolate the two consequences needed below: $m''\ge0$ and the sharp tail relation $gr\to1$.
\end{remark}

We shall also use the symmetric Mills identity
\begin{equation}\label{eq:cont-mills-identity}
 F(-x)m'(x)+F(x)m'(-x)=1,
\end{equation}
which follows immediately from \eqref{eq:mills-diff}, oddness of $g$, and symmetry of $F$.

\subsection{Indicator factors: a hierarchy of conclusions}

Because $m$ is nondecreasing under (V1)--(V2), the two branches in \eqref{eq:indicator-solution} meet at their common maximum.  Hence
\begin{equation}\label{eq:cont-envelope}
 \norm{f_z}_\infty
 =\frac{F(z)(1-F(z))}{p(z)}.
\end{equation}
Put
\begin{equation}\label{eq:CK}
 C_{\mathrm K}(V)
 :=\sup_{z\in\R}\norm{f_z}_\infty
 =\sup_{z\in\R}\frac{F(z)(1-F(z))}{p(z)}.
\end{equation}
For indicator solutions, $f_z'$ is understood on $\R\setminus\{z\}$ and its norm is the essential supremum, equivalently the supremum over the two smooth branches.  The next theorem makes the hierarchy explicit.  It separates convex unimodality, reciprocal-drift convexity, and the additional sharpness information.

\begin{theorem}[Continuous indicator Stein factors]\label{thm:continuous-main}
Assume (V1)--(V2).

\emph{(a) Basic bounds.} For every $z\in\R$,
\begin{equation}\label{eq:basic-first-order}
 \norm{f_z'}_\infty\le1,
 \qquad
 \norm{g f_z}_\infty\le \max\{F(z),1-F(z)\}\le1.
\end{equation}
Moreover, for every $a>0$,
\begin{equation}\label{eq:CK-a-bound}
 \frac1{4p_0}\le C_{\mathrm K}(V)
 \le
 \max\left\{
 \frac{e^{V(a)}}{4p_0},\frac1{g(a)}
 \right\}.
\end{equation}
Equivalently,
\begin{equation}\label{eq:CK-BV}
 C_{\mathrm K}(V)\le B_V
 :=\inf_{a>0}\max\left\{\frac{e^{V(a)}}{4p_0},\frac1{g(a)}\right\}<\infty.
\end{equation}

\emph{(b) Sharp first-order factors.} If (V3) also holds, then $x\mapsto g(x)f_z(x)$ is nondecreasing and, for every $z\in\R$,
\begin{align}
 \norm{g f_z}_\infty
 &=\max\{F(z),1-F(z)\},\label{eq:fixed-z-drift}\\
 \norm{f_z'}_\infty
 &=\max\bigl\{(1-F(z))m'(z),\,F(z)m'(-z)\bigr\}.
 \label{eq:fixed-z-derivative}
\end{align}
Consequently,
\begin{equation}\label{eq:sharp-first-order}
 \sup_{z\in\R}\norm{f_z'}_\infty=1,
 \qquad
 \sup_{z\in\R}\norm{g f_z}_\infty=1.
\end{equation}

\emph{(c) Optimal solution factor under a shape condition.} Define
\begin{equation}\label{eq:K-def}
 K(x):=4p_0\{2F(x)-1\}-g(x),\qquad x>0.
\end{equation}
For part~(c), assume, in addition to (V1)--(V2), the one-crossing condition
\begin{equation*}
 \tag{OC}\label{eq:OC}
 K>0\text{ on }(0,x_*),\qquad K(x_*)=0,\qquad K<0\text{ on }(x_*,\infty)
\end{equation*}
for some $x_*>0$.  Then
\begin{equation}\label{eq:optimal-cont-factor}
 C_{\mathrm K}(V)=\frac1{4p(0)}.
\end{equation}
\end{theorem}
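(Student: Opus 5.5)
The plan is to work directly from the explicit solution \eqref{eq:indicator-solution}. On the left branch $x<z$ we have $f_z=(1-F(z))m(x)$ and $f_z'=(1-F(z))m'(x)$. On the right branch $x>z$ we have $f_z=F(z)r(x)$ and $f_z'=F(z)(g r-1)(x)=-F(z)m'(-x)$.

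For part~(a) I bound each branch using Proposition~\ref{prop:cont-mills}.
\begin{itemize}
\item For $x<0$ we have $0\le m'(x)=1-g(|x|)r(|x|)\le 1$.
\item For $0\le x<z$, monotonicity of $F$ gives
\[
(1-F(z))m'(x)\le (1-F(x))+g(x)F(x)r(x)\le (1-F(x))+F(x)=1,
\]
where the last step is the upper Mills bound \eqref{eq:upper-mills}.
\item The right branch is handled symmetrically, since $f_z'=-F(z)m'(-x)$ and $F(z)\le F(x)$ for $x>z$.
\end{itemize}
The drift bound is analogous. For $x<0$ we have $g m=-g(|x|)r(|x|)\in[-1,0]$. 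For $0\le x\le z$ we get $(1-F(z))g m\le F(x)\,g r\le F(z)$, and similarly on the right branch; this gives $\norm{g f_z}_\infty\le\max\{F(z),1-F(z)\}$.

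Still in part~(a), by \eqref{eq:cont-envelope} the lower bound in \eqref{eq:CK-a-bound} is the value $1/(4p_0)$ at $z=0$. For the upper bound, split according to $|z|$.
\begin{itemize}
\item If $|z|\le a$, use $F(1-F)\le 1/4$ and $p(z)\ge p_0e^{-V(a)}$; the latter holds because $V$ is even and increasing on $(0,\infty)$ by (V2).
\item If $|z|>a$, use symmetry and $F(1-F)/p\le r(|z|)\le 1/g(|z|)\le 1/g(a)$.
\end{itemize}
Finiteness of $B_V$ follows since $g(a)>0$.

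For part~(b) I will show that $g f_z$ is nondecreasing, branch by branch.
\begin{itemize}
\item On the left branch, $(gm)'=(g'+g^2)m+g=m''\ge0$ by Proposition~\ref{prop:cont-mills}.
\item On the right branch, $(gr)'=(g'+g^2)r-g\ge0$ by the lower Mills bound \eqref{eq:two-sided-mills}.
\item At $x=z$, $f_z$ is continuous, so there is no jump.
\end{itemize}
Hence $\norm{g f_z}_\infty$ is the larger of the absolute limits at $\pm\infty$. Using \eqref{eq:tail-asymptotic} and symmetry, these limits are $F(z)$ at $+\infty$ and $-(1-F(z))$ at $-\infty$, which is \eqref{eq:fixed-z-drift}.

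For \eqref{eq:fixed-z-derivative}, the convexity $m''\ge0$ makes $m'$ nondecreasing. So $(1-F(z))m'(x)$ is maximal as $x\uparrow z$, and $F(z)m'(-x)$ is maximal as $x\downarrow z$. To get sharpness in \eqref{eq:sharp-first-order}, let $z\to\infty$. Then $(1-F(z))m'(z)=(1-F(z))+F(z)g(z)r(z)\to1$ by \eqref{eq:tail-asymptotic}, and $F(z)\to1$; together with the upper bounds from (a), both suprema equal $1$.

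For part~(c), which I expect to be the only non-routine step, I avoid analysing the critical points of $\Phi(z)=F(1-F)/p$ directly. Instead I introduce
\[
G(x):=\frac{p(x)}{4p_0}-F(x)\{1-F(x)\},\qquad x\ge0.
\]
By symmetry of $\Phi$, the claim $C_{\mathrm K}(V)=1/(4p_0)$ is equivalent to $G\ge0$ on $[0,\infty)$, combined with the lower bound from (a). Using \eqref{eq:pprime} we get
\[
G'(x)=-\frac{g p}{4p_0}-(1-2F)p=\frac{p(x)}{4p_0}K(x).
\]
Under \eqref{eq:OC}, $G$ is increasing on $(0,x_*)$ and decreasing on $(x_*,\infty)$. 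Since $G(0)=1/4-1/4=0$ and $G(x)\to0$ as $x\to\infty$ (both $p$ and $1-F$ tend to $0$), $G$ cannot become negative. Hence $\Phi\le 1/(4p_0)$, with equality at $z=0$, which proves \eqref{eq:optimal-cont-factor}.
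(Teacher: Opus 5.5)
Your proposal is correct and follows essentially the same route as the paper. The main steps coincide: the branchwise Mills bounds in (a), where your identity $(1-F(x))m'(x)=(1-F(x))+F(x)g(x)r(x)$ is the paper's symmetric Mills identity in disguise; monotonicity of $gf_z$ via $m''\ge0$, with endpoint limits from $gr\to1$, in (b); and in (c) your $G=J/(4p_0)$ with $G'=pK/(4p_0)$ and the same one-crossing argument. One small point needs tidying. On the right branch with $z<x<0$, the inequality $(g'+g^2)r-g\ge0$ does not come from the lower Mills bound (which is stated for $x>0$). It holds trivially there because $g(x)<0$; equivalently, $(gr)'(x)=m''(-x)\ge0$.
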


\begin{proof}
The lower bound in \eqref{eq:CK-a-bound} is the value of \eqref{eq:cont-envelope} at $z=0$.  By symmetry take $z\ge0$.  For $0\le z\le a$,
\[
 \frac{F(z)(1-F(z))}{p(z)}\le\frac{e^{V(a)}}{4p_0},
\]
whereas for $z\ge a$, Proposition~\ref{prop:cont-mills} gives
\[
 \frac{F(z)(1-F(z))}{p(z)}=F(z)r(z)
 \le r(z)\le\frac1{g(z)}\le\frac1{g(a)}.
\]
This proves \eqref{eq:CK-a-bound}--\eqref{eq:CK-BV}.

For $x\ne z$, differentiation of \eqref{eq:indicator-solution} gives
\[
 f_z'(x)=
 \begin{cases}
 (1-F(z))m'(x),&x<z,\\
 -F(z)m'(-x),&x>z.
 \end{cases}
\]
By Proposition~\ref{prop:cont-mills}, $m'\ge0$, and the two terms in \eqref{eq:cont-mills-identity} are nonnegative.  If $x<z$, then $1-F(z)=F(-z)\le F(-x)$, so
\[
 0\le f_z'(x)\le F(-x)m'(x)\le1.
\]
If $x>z$, then $F(z)\le F(x)$ and therefore $-1\le f_z'(x)\le0$.  Thus $\|f_z'\|_\infty\le1$.  The Stein equation gives, on either side of $z$,
\[
 -(1-F(z))\le g(x)f_z(x)\le F(z),
\]
which proves the sharper drift bound in \eqref{eq:basic-first-order}.

Assume (V3).  Since $m''\ge0$,
\[
 \frac{\dd}{\dd x}\{g(x)f_z(x)\}
 =
 \begin{cases}
 (1-F(z))m''(x),&x<z,\\
 F(z)m''(-x),&x>z,
 \end{cases}
\]
on the two open branches.  Since $f_z$ and $g$ are continuous, $g f_z$ is continuous at $z$; hence $g f_z$ is nondecreasing on all of $\R$.  The relation $gr\to1$ gives the endpoint limits $-(1-F(z))$ and $F(z)$, proving \eqref{eq:fixed-z-drift}.

The same monotonicity of $m'$ shows that the two one-sided extrema of $|f_z'|$ occur at $z$, which gives \eqref{eq:fixed-z-derivative}.  Its two entries add to $1$ by \eqref{eq:cont-mills-identity}.  As $z\to\infty$, $m'(-z)=1-g(z)r(z)\to0$, so the first entry tends to $1$.  This proves the first equality in \eqref{eq:sharp-first-order}; the second follows immediately from \eqref{eq:fixed-z-drift}.

Finally set
\[
 J(x):=p(x)-4p_0F(x)(1-F(x)),\qquad x\ge0.
\]
Then $J'(x)=p(x)K(x)$, while $J(0)=0$ and $J(x)\to0$ as $x\to\infty$.  Under \eqref{eq:OC}, $J$ first increases and then decreases to $0$, hence $J\ge0$.  Equivalently,
\[
 \frac{F(x)(1-F(x))}{p(x)}\le\frac1{4p_0},
\]
with equality at $x=0$, which proves part~(c).
\end{proof}

\begin{remark}[Sharpness and comparison with earlier continuous Stein factors]\label{rem:continuous-comparison}
Theorem~\ref{thm:continuous-main} should be compared with earlier non-normal Stein results at two distinct levels: the size of the available upper bounds and the question whether the corresponding Stein factors are actually sharp.

For the power-exponential targets arising in mean-field statistical mechanics, Eichelsbacher and L\"owe \cite{EichelsbacherLoewe2010} proved the indicator bounds
\[
 \norm{f_z}_\infty\le\frac1{2p(0)},
 \qquad \norm{f_z'}_\infty\le1,
 \qquad \norm{g f_z}_\infty\le1.
\]
They explicitly observed that the solution constant $1/(2p(0))$ is not optimal and remarked that a different argument would lead to optimal constants, but this optimization was not carried out there.  Shao and Zhang \cite{ShaoZhang2019} subsequently treated a substantially broader class of non-normal targets and, in the present symmetric normalization, obtained
\[
 0\le f_z(x)\le\frac1{p(0)},
 \qquad \norm{f_z'}_\infty\le1,
 \qquad \norm{g f_z}_\infty\le1,
\]
together with monotonicity of $g f_z$.  These estimates are used as Stein-factor upper bounds in their Berry--Esseen analysis; the solution factor is not optimized there, and the constants $1$ are not identified there as sharp uniform suprema.

For the independent kernel approach of Rapin and Swan \cite{RapinSwan2026} and the detailed comparison on the Subbotin family, see Remark~\ref{rem:power-exponential}.  The point needed here is only that, for the structural class considered in this section, the first-order sharpness statements follow directly from the Mills identities above and the optimal solution factor follows from the one-crossing test in part~(c).

Thus parts~(b)--(c) identify exact Stein factors rather than merely provide upper bounds: under (V3) both first-order constants equal $1$, and under (OC) the solution factor is exactly $1/(4p(0))$.  For the power-exponential family this replaces the earlier bound $1/(2p(0))$ by its exact optimal value $1/(4p(0))$.
\end{remark}

\begin{remark}[The explicit basic bound]
The two terms in \eqref{eq:CK-a-bound} move in opposite directions.  Since
\[
 \frac{\dd}{\dd a}\{e^{V(a)}g(a)\}=e^{V(a)}\{g(a)^2+g'(a)\}>0,
\]
and $e^{V(a)}g(a)$ increases from $0$ to $\infty$ as $a$ runs from $0$ to $\infty$, there is a unique $a_V>0$ satisfying
\begin{equation}\label{eq:aV}
 e^{V(a_V)}g(a_V)=4p_0.
\end{equation}
Thus
\[
 B_V=\frac1{g(a_V)}=\frac{e^{V(a_V)}}{4p_0}.
\]
This provides a concrete finite solution factor without any one-crossing assumption.
\end{remark}

\begin{remark}[What one crossing does---and does not mean]\label{rem:OC-not-necessary}
Condition \eqref{eq:OC} is not necessary for the optimal value in \eqref{eq:optimal-cont-factor}.  The exact necessary and sufficient condition is simply
\begin{equation}\label{eq:exact-sharp-condition}
 J(x)=p(x)-4p_0F(x)(1-F(x))\ge0,
 \qquad x\ge0,
\end{equation}
or, equivalently,
\[
 \int_0^x p(t)K(t)\dd t\ge0
 \qquad(x\ge0).
\]
This condition is global and is essentially a restatement of the desired optimal bound.  The role of (OC) is to provide a transparent local shape criterion implying it.
\end{remark}

\section{Why the continuous assumptions are separated}\label{sec:assumptions}

The normalization $V(0)=0$ in (V1) is harmless: adding a constant to $V$ does not change the normalized density.  The substantive assumptions are symmetry, smoothness, integrability, convex unimodality, reciprocal-drift convexity, and---only for the closed-form optimal solution factor---additional global shape information.  The next two examples show that the last two requirements are genuinely additional.

\begin{example}[Strict convexity of $V$ does not imply (V3)]\label{ex:V3-fail}
Let
\[
 V(x)=\frac{x^2}{2}-\frac{x^4}{4}+\frac{x^6}{6},
 \qquad
 g(x)=x-x^3+x^5.
\]
For $x>0$, $g(x)>0$, and
\[
 g'(x)=1-3x^2+5x^4>0
\]
for all $x$, so $V$ is strictly convex and (V2) holds.  However
\[
 2(g')^2-gg''
 =2\bigl(15x^8-17x^6+6x^4-3x^2+1\bigr),
\]
which equals $-3/8$ at $x=1/\sqrt2$.  Thus (V3) is genuinely stronger than ordinary convexity of $V$.
\end{example}

\begin{example}[(V1)--(V3) do not force the optimal origin value]\label{ex:OC-fail}
Consider the explicit potential
\[
 V(x)=\log\cosh x.
\]
Then
\[
 p(x)=\frac1{\pi\cosh x},
 \qquad
 p(0)=\frac1\pi,
 \qquad
 g(x)=\tanh x.
\]
Moreover,
\[
 g'(x)=\operatorname{sech}^2x,
 \qquad
 g''(x)=-2\operatorname{sech}^2x\tanh x,
\]
and hence
\[
 2(g')^2-gg''=2\operatorname{sech}^2x>0.
\]
Thus (V1)--(V3) hold strictly.  The distribution function is
\[
 F(x)=\frac12+\frac1\pi\arctan(\sinh x).
\]
By Proposition~\ref{prop:cont-mills}, or directly from the explicit formula,
\[
 \frac{1-F(x)}{p(x)}\longrightarrow1,
\]
so
\[
 \frac{F(x)(1-F(x))}{p(x)}\longrightarrow1.
\]
At the origin, however,
\[
 \frac1{4p(0)}=\frac\pi4<1.
\]
Therefore the core conditions (V1)--(V3) do not imply that the optimal solution factor is attained at the origin.  Some additional global information, such as \eqref{eq:exact-sharp-condition} or the sufficient one-crossing criterion, is genuinely needed.
\end{example}

\section{Even-power and Subbotin targets}\label{sec:continuous-examples}

We finally return to the family that originally motivated the continuous part.  In the statistical-mechanics applications of Eichelsbacher and L\"owe \cite{EichelsbacherLoewe2010}, the limiting densities considered there are proportional to
\[
 \exp\left\{-\frac{\mu |x|^{2k}}{(2k)!}\right\},
 \qquad k\ge1.
\]
With $\lambda=\mu/(2k)!$, write
\begin{equation}\label{eq:power-exp}
 p_{k,\lambda}(x)
 =\frac{k\lambda^{1/(2k)}}{\Gamma(1/(2k))}e^{-\lambda |x|^{2k}},
 \qquad k\ge1,\quad \lambda>0.
\end{equation}
For $x>0$ the corresponding drift is $g(x)=2k\lambda x^{2k-1}$, and
\[
 2(g')^2-gg''=(2k)^3(2k-1)\lambda^2x^{4k-4}\ge0.
\]
Thus (V1)--(V3) hold.  Moreover, with $K$ as in \eqref{eq:K-def},
\[
 K''(x)=-8p(0)g(x)p(x)-g''(x)<0,
 \qquad x>0,
\]
and $K'(0+)>0$: for $k\ge2$ this is immediate, while for $k=1$ it equals
$2\lambda(4/\pi-1)$.  Hence (OC) holds for every $k\ge1$, and Theorem~\ref{thm:continuous-main} gives
\begin{equation}\label{eq:power-exp-factors}
 \sup_{z\in\R}\norm{f_z}_\infty
 =\frac{\Gamma(1/(2k))}{4k\lambda^{1/(2k)}},
 \qquad
 \sup_{z\in\R}\norm{f_z'}_\infty=1,
 \qquad
 \sup_{z\in\R}\norm{2k\lambda\,\operatorname{sgn}(x)|x|^{2k-1}f_z(x)}_\infty=1.
\end{equation}
All three constants are sharp.

\begin{corollary}[Quartic Stein factors]\label{cor:quartic}
For
\[
 p_\lambda(x)=\frac{2\lambda^{1/4}}{\Gamma(1/4)}e^{-\lambda x^4},
 \qquad \lambda>0,
\]
one has
\[
 \sup_{z\in\R}\norm{f_z}_\infty
 =\frac{\Gamma(1/4)}{8\lambda^{1/4}},
 \qquad
 \sup_{z\in\R}\norm{f_z'}_\infty=1,
 \qquad
 \sup_{z\in\R}\norm{4\lambda x^3f_z(x)}_\infty=1.
\]
\end{corollary}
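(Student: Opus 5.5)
This corollary is the case $k=2$ of the even-power family \eqref{eq:power-exp}, so the plan is to specialize \eqref{eq:power-exp-factors}. It is cleanest to verify the hypotheses of Theorem~\ref{thm:continuous-main} directly for $V(x)=\lambda x^4$, which after the normalization $V(0)=0$ gives $p_0=p_\lambda(0)=2\lambda^{1/4}/\Gamma(1/4)$. The checks are as follows.

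\textbf{(V1).} $V$ is even, smooth, and $Z_V=\int e^{-\lambda x^4}\dd x=\Gamma(1/4)/(2\lambda^{1/4})$ is finite.

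\textbf{(V2).} The drift is $g(x)=4\lambda x^3$, which is positive for $x>0$, and $g'(x)=12\lambda x^2\ge0$.

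\textbf{(V3).} A direct computation gives
\[
 2(g')^2-gg''=288\lambda^2x^4-96\lambda^2x^4=192\lambda^2x^4\ge0,
\]
which agrees with $(2k)^3(2k-1)\lambda^2x^{4k-4}$ at $k=2$.

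With (V1)--(V3) in hand, Theorem~\ref{thm:continuous-main}(b) gives the two first-order identities $\sup_z\|f_z'\|_\infty=1$ and $\sup_z\|4\lambda x^3f_z\|_\infty=1$.

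For the solution factor, I would verify (OC) for $K(x)=4p_0\{2F(x)-1\}-4\lambda x^3$ on $(0,\infty)$.

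\textbf{Behaviour at the origin.} $K(0)=0$ and $K'(0+)=8p_0^2>0$, since $g'(0)=0$. So $K>0$ just to the right of $0$.

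\textbf{Concavity.} Using $p'=-gp$, we get $K''(x)=-8p_0g(x)p(x)-g''(x)$. Both terms are negative for $x>0$, because $g''(x)=24\lambda x>0$. So $K$ is strictly concave on $(0,\infty)$.

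\textbf{Behaviour at infinity.} $K(x)\to-\infty$ as $x\to\infty$, because $2F-1$ stays bounded while $4\lambda x^3\to\infty$.

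A strictly concave function that is $0$ at the origin, positive just to the right, and tends to $-\infty$ has exactly one positive zero $x_*$. It is positive before $x_*$ and negative after, which is precisely (OC).

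Theorem~\ref{thm:continuous-main}(c) then yields
\[
 C_{\mathrm K}=\frac{1}{4p_0}=\frac{\Gamma(1/4)}{8\lambda^{1/4}}.
\]

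The only step that needs any care is the concavity argument for one crossing, and there mainly the sign of $K'(0+)$. That sign is immediate here because $g'(0)=0$, unlike the Gaussian case $k=1$, where the constant $4/\pi-1$ must be checked. The rest is bookkeeping of the normalizing constant.
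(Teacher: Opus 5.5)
Your proposal is correct and follows essentially the paper's route: the paper proves the corollary as the case $k=2$ of the even-power computation, which checks (V1)--(V3), shows (OC) from $K''<0$ and $K'(0+)>0$, and then applies Theorem~\ref{thm:continuous-main}. Your explicit concavity argument for the unique crossing simply spells out the step the paper summarizes as ``hence (OC) holds.''
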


\begin{proof}
This is the case $k=2$ of \eqref{eq:power-exp}--\eqref{eq:power-exp-factors}.
\end{proof}

\begin{remark}[Comparison for the even-power targets]\label{rem:even-power-comparison}
The family \eqref{eq:power-exp}, rather than the full Subbotin family considered below, was the original motivation for the continuous examples in this paper.  Eichelsbacher and L\"owe \cite{EichelsbacherLoewe2010} obtained for these targets the indicator estimate $\norm{f_z}_\infty\le1/(2p(0))$ together with the first-order bounds by $1$, and explicitly noted that the solution constant was not optimal.  The general non-normal frameworks of Chatterjee and Shao \cite{ChatterjeeShao2011} and Shao and Zhang \cite{ShaoZhang2019} likewise provide Stein factors for approximation purposes.  Formula \eqref{eq:power-exp-factors} identifies instead the exact zeroth-order constant $1/(4p(0))$ and the sharpness of the two first-order constants on this family.

During the final preparation of the manuscript, Rapin and Swan \cite{RapinSwan2026} posted their recent work.  Their substantially broader kernel calculus includes the Subbotin family and, on the even-power overlap, yields the same sharp zeroth- and first-derivative constants.  Their Subbotin example prompted us to examine whether the elementary Mills-ratio and one-crossing calculation above extends beyond even integer powers.  We return to this point in Remark~\ref{rem:power-exponential}.
\end{remark}

\begin{remark}[The critical Curie--Weiss specialization]\label{rem:CW}
The quartic critical Curie--Weiss law is already contained in Corollary~\ref{cor:quartic}: choosing $\lambda=1/12$ gives
\[
 p_*(x)=\frac{\sqrt2}{3^{1/4}\Gamma(1/4)}e^{-x^4/12}
\]
and hence
\[
 \sup_z\norm{f_z}_\infty
 =\frac{12^{1/4}\Gamma(1/4)}8,
 \qquad
 \sup_z\norm{f_z'}_\infty=1,
 \qquad
 \sup_z\norm{\tfrac13x^3f_z(x)}_\infty=1.
\]
Thus no separate argument is needed for the classical critical Curie--Weiss normalization.
\end{remark}

\begin{remark}[From even powers to the Subbotin family]\label{rem:power-exponential}
Before carrying out the extension, it is useful to record the classical parametrization.  The exponential-power family goes back to Subbotin's 1923 paper \emph{On the Law of Frequency of Error} \cite{Subbotin1923} and, in the normalization used by Rapin and Swan \cite{RapinSwan2026}, is written
\[
 p_\beta(x)=C_\beta
 \exp\left\{-\frac{|x|^\beta}{\beta(\beta-1)}\right\},
 \qquad \beta>1.
\]
Motivated by their treatment, we consider the slightly more flexible scaled family
\begin{equation}\label{eq:subbotin-scaled}
 p_{\beta,\lambda}(x)
 =\frac{\beta\lambda^{1/\beta}}{2\Gamma(1/\beta)}
   e^{-\lambda |x|^\beta},
 \qquad \beta>1,\quad \lambda>0.
\end{equation}
The classical normalization corresponds to $\lambda=1/\{\beta(\beta-1)\}$, while $\beta=2k$ recovers the even-power family \eqref{eq:power-exp}.

We first take $\beta\ge2$.  Then $V(x)=\lambda|x|^\beta$ satisfies (V1)--(V3).  For $x>0$,
\[
 g(x)=\beta\lambda x^{\beta-1},
 \qquad
 g'(x)=\beta(\beta-1)\lambda x^{\beta-2},
 \qquad
 2(g')^2-gg''=\beta^3(\beta-1)\lambda^2x^{2\beta-4}>0.
\]
Moreover,
\[
 K''(x)=-8p(0)g(x)p(x)-g''(x)<0.
\]
If $\beta>2$, then $K'(0+)=8p(0)^2>0$, whereas for $\beta=2$,
\[
 K'(0+)=8p(0)^2-2\lambda
       =2\lambda\left(\frac4\pi-1\right)>0.
\]
Thus (OC) holds throughout the range $\beta\ge2$, and Theorem~\ref{thm:continuous-main} yields
\begin{equation}\label{eq:subbotin-scaled-factors}
 \sup_z\norm{f_z}_\infty
 =\frac{\Gamma(1/\beta)}{2\beta\lambda^{1/\beta}},
 \qquad
 \sup_z\norm{f_z'}_\infty=1,
 \qquad
 \sup_z\norm{\beta\lambda\,\operatorname{sgn}(x)|x|^{\beta-1}f_z(x)}_\infty=1.
\end{equation}

In the classical normalization, the first constant in \eqref{eq:subbotin-scaled-factors} becomes
\[
 \frac{(\beta(\beta-1))^{1/\beta}\Gamma(1/\beta)}{2\beta},
\]
which agrees with the sharp Kolmogorov solution factor in equation~(3.24) of Rapin and Swan \cite{RapinSwan2026}; the sharp first-derivative factor $1$ agrees with their second identity there.  Moreover,
\[
 \rho_{p_\beta}(x)
 =-\frac{\operatorname{sgn}(x)|x|^{\beta-1}}{\beta-1}
 =-g(x),
\]
so \eqref{eq:subbotin-scaled-factors} also identifies the corresponding score-weighted factor as sharp.  Rapin and Swan obtain the upper bound by $1$ for this quantity in their equation~(3.25).  Their kernel theory is substantially broader in derivative order and test-function regularity; the point of the calculation here is that the common Kolmogorov quantities on $\beta\ge2$ follow directly from the elementary potential conditions and the one-crossing criterion.

The interval $1<\beta<2$ is different.  In that range $V(x)=\lambda|x|^\beta$ is not $C^2$ at the origin, so (V1)--(V3) are not all available as stated and Theorem~\ref{thm:continuous-main} does not apply directly.  In particular, the preceding origin-maximization argument does not determine the exact indicator envelope.  The comparison above with Rapin and Swan leaves this exact maximization problem open in the form needed here.  We therefore treat the range separately in Proposition~\ref{prop:subbotin-below-two}.
\end{remark}

The transformed distribution $P_a(u)^2$ appearing below belongs to the exponentiated-gamma setting; related exponentiated generalized gamma models and hazard-rate comparisons for gamma systems have been studied in \cite{CordeiroOrtegaSilva2011,BalakrishnanZhao2013}.  Although the exponentiated gamma distribution and several qualitative forms of its hazard rate have been studied previously, we have not found in the literature the following precise result for the boundary regime in which the hazard converges to a finite positive level.

\begin{proposition}[Subbotin targets below the Gaussian threshold]\label{prop:subbotin-below-two}
Let $1<\beta<2$ and $\lambda>0$, and let $Y$ have density $p_{\beta,\lambda}$ from \eqref{eq:subbotin-scaled}.  Put
\[
 a:=\frac1\beta\in\left(\frac12,1\right),
 \qquad
 P_a(u):=\frac{\gamma(a,u)}{\Gamma(a)},
 \qquad u\ge0,
\]
where $\gamma(a,u)=\int_0^u t^{a-1}e^{-t}\dd t$ is the lower incomplete gamma function.  Then the exact indicator-solution envelope
\[
 H_{\beta,\lambda}(x):=\sup_{z\in\R}|f_z(x)|
 =\frac{F(x)\{1-F(x)\}}{p_{\beta,\lambda}(x)}
\]
has exactly two global maximizers $\pm x_{\beta,\lambda}$, where
\begin{equation}\label{eq:subbotin-offcentre-location}
 x_{\beta,\lambda}
 =\left(\frac{u_\beta}{\lambda}\right)^{1/\beta}
\end{equation}
and $u_\beta$ is the unique positive solution of
\begin{equation}\label{eq:subbotin-critical-equation}
 2P_a(u)P_a'(u)=1-P_a(u)^2.
\end{equation}
Consequently,
\begin{align}
 C_{\mathrm K}(\beta,\lambda)
 :=\sup_{z\in\R}\norm{f_z}_\infty
 &=\frac{\Gamma(a)}{2\beta\lambda^a}
   e^{u_\beta}\{1-P_a(u_\beta)^2\}\label{eq:subbotin-below-factor}\\
 &=\frac{P_a(u_\beta)u_\beta^{a-1}}{\beta\lambda^a}
 >\frac1{4p_{\beta,\lambda}(0)}.\label{eq:subbotin-below-factor-alt}
\end{align}
Moreover, with
\[
 g(x)=\beta\lambda\,\operatorname{sgn}(x)|x|^{\beta-1},
\]
the two first-order factors remain sharp:
\begin{equation}\label{eq:subbotin-below-first-order}
 \sup_{z\in\R}\norm{f_z'}_\infty=1,
 \qquad
 \sup_{z\in\R}\norm{g f_z}_\infty=1.
\end{equation}
\end{proposition}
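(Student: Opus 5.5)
We pass to the variable $u=\lambda|x|^\beta$, in which the envelope becomes a function of the incomplete-gamma ratio $P_a$. For $x>0$, the substitution $t=\lambda s^\beta$ gives
\[
 F(x)=\tfrac12\{1+P_a(u)\},\qquad p_{\beta,\lambda}(x)=p_0e^{-u},\qquad p_0=\frac{\beta\lambda^a}{2\Gamma(a)}.
\]
Since the envelope formula \eqref{eq:cont-envelope} only uses that $m$ is nondecreasing (argued below), we get
\[
 H_{\beta,\lambda}(x)=\frac{1-P_a(u)^2}{4p_0e^{-u}}=\frac{\Gamma(a)}{2\beta\lambda^a}\,\varphi(u),\qquad \varphi(u):=e^u\{1-P_a(u)^2\}.
\]
By evenness it suffices to maximize $\varphi$ on $[0,\infty)$. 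We have $\varphi'(u)=e^u\psi(u)$ with $\psi:=1-P_a^2-2P_aP_a'$. This reproduces \eqref{eq:subbotin-critical-equation} as the critical-point equation.

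Uniqueness of the zero of $\psi$ is the main step, and we handle it by a second-derivative sign analysis. Using $P_a'(u)=u^{a-1}e^{-u}/\Gamma(a)$, hence $P_a''=P_a'\{(a-1)/u-1\}$, a short computation gives
\[
 \psi'(u)=-2P_a'(u)\,\chi(u)/u,\qquad \chi(u):=uP_a'(u)-(1-a)P_a(u).
\]
Then $\chi(0)=0$ and $\chi'(u)=u^{a-1}e^{-u}(2a-1-u)/\Gamma(a)$. Since $a>1/2$, $\chi$ increases on $(0,2a-1)$ and decreases afterwards to the limit $-(1-a)<0$, so $\chi$ has exactly one positive zero $u_0$. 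Hence $\psi$ decreases on $(0,u_0)$ and increases on $(u_0,\infty)$. For the boundary values:
\begin{itemize}
\item $\psi(0+)=1$, because $P_aP_a'\sim u^{2a-1}/(a\Gamma(a)^2)\to0$ (again using $a>1/2$).
\item $\psi(u)\to0$ as $u\to\infty$.
\end{itemize}
Since $\psi$ is increasing on $(u_0,\infty)$ with limit $0$, it is negative there. Together with the monotone decrease from $1$ on $(0,u_0)$, $\psi$ has exactly one zero $u_\beta\in(0,u_0)$; it is positive before $u_\beta$ and negative after. Therefore $\varphi$ strictly increases on $(0,u_\beta)$ and strictly decreases afterwards. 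This gives the unique maximizers $\pm x_{\beta,\lambda}$ in \eqref{eq:subbotin-offcentre-location} and the value \eqref{eq:subbotin-below-factor}.

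At $u_\beta$ we substitute $1-P_a^2=2P_aP_a'=2P_au^{a-1}e^{-u}/\Gamma(a)$, which yields \eqref{eq:subbotin-below-factor-alt}. The strict inequality follows from $\varphi(u_\beta)>\varphi(0)=1$ together with $\frac{\Gamma(a)}{2\beta\lambda^a}=\frac1{4p_0}$.

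For the first-order factors, we redo the relevant parts of Proposition~\ref{prop:cont-mills} and Theorem~\ref{thm:continuous-main} directly, since $V$ is only $C^1$ at the origin. The function $g$ is continuous, odd, strictly positive on $(0,\infty)$ and nondecreasing, and $p'=-gp$ holds everywhere. So the proof of \eqref{eq:upper-mills} applies verbatim, and so does $m'\ge0$, which also justifies the envelope formula used above. The part~(a) argument then gives $\|f_z'\|_\infty\le1$ and $\|gf_z\|_\infty\le1$ for every $z$, because it only uses $m'\ge0$ and the identity \eqref{eq:cont-mills-identity}.

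For sharpness we only need the tail relation $g(x)r(x)\to1$. On $(0,\infty)$, $V$ is smooth and $2(g')^2-gg''=\beta^3(\beta-1)\lambda^2x^{2\beta-4}>0$. Hence the lower-Mills argument of Proposition~\ref{prop:cont-mills}, run on $(x,\infty)$ for $x>0$, gives $g/(g^2+g')\le r\le1/g$. Since $g'/g^2=(\beta-1)/(\beta\lambda x^\beta)\to0$, this yields $gr\to1$. Then:
\begin{itemize}
\item $f_z'(z-)=(1-F(z))\{1+g(z)m(z)\}=F(z)\,g(z)r(z)+(1-F(z))\to1$ as $z\to\infty$;
\item $g(x)f_z(x)=F(z)\,g(x)r(x)\to F(z)$ as $x\to\infty$, and $F(z)\to1$ as $z\to\infty$.
\end{itemize}
This proves \eqref{eq:subbotin-below-first-order}.
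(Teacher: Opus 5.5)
Your proposal is correct. For the central uniqueness step it takes a different and somewhat more economical route than the paper.

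\textbf{Uniqueness of the critical point.} The paper works with $D_a=2P_aP_a'/(1-P_a^2)$, which is the hazard rate of the exponentiated-gamma law $P_a^2$. It first shows, by a conditional-mean argument and the ODE for $R_a=uP_a'/P_a$, that $R_a$ decreases strictly from $a$ to $0$. It then controls $D_a-1$ only at its zeros, through $D_a'=(R_a-(1-a))/u$ on $\{D_a=1\}$. This requires a case analysis (first contact, a possible tangency at $u_0$, later returns) and a separate existence argument based on $\Phi_a(u)\le 2u^{a-1}/\Gamma(a)\to0$.

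You instead differentiate the numerator $\psi=(1-P_a^2)(1-D_a)$ directly and get the global formula $\psi'=-2P_a'\chi/u$. Since $\chi=P_a\{R_a-(1-a)\}$, your $u_0$ is exactly the paper's. The elementary identity $\chi'=P_a'(2a-1-u)$ replaces the monotonicity analysis of $R_a$. The boundary values $\psi(0+)=1$ and $\psi(\infty)=0$ then give existence and uniqueness of the sign change at once, with no tail bound on $\Phi_a$. What the paper's formulation buys is the reading as a one-crossing statement for the exponentiated-gamma hazard rate, which it presents as of independent interest.

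\textbf{First-order factors.} Here you also take a lighter path. The paper extends the monotonicity of $m'$ across the origin and invokes Theorem~\ref{thm:continuous-main}(b). You combine the part~(a) upper bounds, which need only $m'\ge0$ and \eqref{eq:cont-mills-identity}, with the tail relation $gr\to1$ along $z\to\infty$. This suffices for \eqref{eq:subbotin-below-first-order}. It does not recover the paper's fixed-$z$ formulas or the monotonicity of $gf_z$, but the statement does not ask for these.

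\textbf{A small correction.} The identity $\sup_z|f_z(x)|=F(x)\{1-F(x)\}/p_{\beta,\lambda}(x)$ follows directly from \eqref{eq:indicator-solution}, because each branch is monotone in $z$, so no monotonicity of $m$ is needed. Equation \eqref{eq:cont-envelope} is the dual statement about $\norm{f_z}_\infty$. Either one reduces $C_{\mathrm K}$ to $\sup\varphi$, so nothing breaks.
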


\begin{proof}
For fixed $x$, formula~\eqref{eq:indicator-solution} shows that the branch with $z\ge x$ decreases in $z$, whereas the branch with $z<x$ increases in $z$.  Hence the supremum over $z$ is attained at $z=x$ (with the usual one-sided interpretation) and equals $F(x)\{1-F(x)\}/p_{\beta,\lambda}(x)$.  By symmetry it is therefore enough to work on $x\ge0$.  With $u=\lambda x^\beta$ one has
\[
 F(x)=\frac{1+P_a(u)}2,
 \qquad
 p_{\beta,\lambda}(x)
 =\frac{\beta\lambda^a}{2\Gamma(a)}e^{-u}.
\]
Hence
\begin{equation}\label{eq:subbotin-Phi}
 H_{\beta,\lambda}(x)
 =\frac{\Gamma(a)}{2\beta\lambda^a}\,\Phi_a(u),
 \qquad
 \Phi_a(u):=e^u\{1-P_a(u)^2\}.
\end{equation}
We prove that $\Phi_a$ has a unique maximizer on $(0,\infty)$.

Set
\[
 D_a(u):=\frac{2P_a(u)P_a'(u)}{1-P_a(u)^2}.
\]
Then
\begin{equation}\label{eq:Phi-log-derivative}
 \frac{\Phi_a'(u)}{\Phi_a(u)}=1-D_a(u).
\end{equation}
Since $P_a(u)\sim u^a/\Gamma(a+1)$ and $P_a'(u)=u^{a-1}e^{-u}/\Gamma(a)$ as $u\downarrow0$,
\begin{equation}\label{eq:D-zero}
 D_a(u)\longrightarrow0\qquad(u\downarrow0),
\end{equation}
because $2a-1>0$.

Introduce also
\[
 R_a(u):=\frac{uP_a'(u)}{P_a(u)}
 =\frac{u^ae^{-u}}{\gamma(a,u)}.
\]
The recurrence
\[
 \gamma(a+1,u)=a\gamma(a,u)-u^ae^{-u}
\]
gives
\[
 R_a(u)=a-\frac{\gamma(a+1,u)}{\gamma(a,u)}.
\]
The quotient on the right is the mean of $t$ with respect to the probability measure proportional to $t^{a-1}e^{-t}\1{(0,u)}(t)\dd t$; hence
\begin{equation}\label{eq:R-bounds}
 a-u<R_a(u)<a.
\end{equation}
Moreover, direct differentiation yields
\begin{equation}\label{eq:R-derivative}
 uR_a'(u)=R_a(u)\{a-u-R_a(u)\}<0.
\end{equation}
Moreover, $R_a(u)\to a$ as $u\downarrow0$ and $R_a(u)\to0$ as $u\to\infty$.  Thus $R_a$ decreases strictly from $a$ to $0$.  Since $1-a\in(0,a)$, there is a unique $u_0=u_0(a)>0$ such that
\begin{equation}\label{eq:R-u0}
 R_a(u_0)=1-a.
\end{equation}

A logarithmic differentiation of $D_a$ gives the key identity
\begin{equation}\label{eq:D-log-derivative}
 \frac{D_a'(u)}{D_a(u)}
 =D_a(u)-1+\frac{R_a(u)-(1-a)}u.
\end{equation}
Therefore, at every point where $D_a(u)=1$,
\begin{equation}\label{eq:D-crossing-slope}
 D_a'(u)=\frac{R_a(u)-(1-a)}u.
\end{equation}
Thus a crossing of the level $1$ to the left of $u_0$ can only be upward, whereas a crossing to the right of $u_0$ can only be downward.  If $D_a(u_0)=1$, then \eqref{eq:D-log-derivative} and \eqref{eq:R-u0} give $D_a'(u_0)=0$, and differentiating once more yields
\[
 D_a''(u_0)=\frac{R_a'(u_0)}{u_0}<0.
\]
Hence such a contact would be a strict local maximum of $D_a$ at level $1$.

It remains to see that a genuine crossing must occur.  Since $0<a<1$, for $t\ge u$ one has $t^{a-1}\le u^{a-1}$, and therefore the upper incomplete gamma function satisfies
\[
 \Gamma(a,u)=\int_u^\infty t^{a-1}e^{-t}\dd t
 \le u^{a-1}e^{-u}.
\]
Consequently,
\[
 0<\Phi_a(u)
 =e^u\{1-P_a(u)\}\{1+P_a(u)\}
 \le\frac{2u^{a-1}}{\Gamma(a)}\longrightarrow0.
\]
On the other hand, \eqref{eq:D-zero} and \eqref{eq:Phi-log-derivative} show that $\Phi_a$ is increasing for small positive $u$.  Since $\Phi_a(u)\to0$, its derivative must be negative somewhere, and hence $D_a>1$ somewhere.  Define
\[
 u_\beta:=\inf\{u>0:D_a(u)\ge1\}.
\]
By continuity, $D_a(u_\beta)=1$.  This first contact cannot occur to the right of $u_0$, because \eqref{eq:D-crossing-slope} would then give $D_a'(u_\beta)<0$, whereas $D_a<1$ immediately to its left.  It cannot occur at $u_0$ either: in that case the tangency calculation above would make $u_0$ a strict local maximum of $D_a$ at level $1$, so $D_a<1$ just to the right of $u_0$.  If there were no later passage above $1$, then $D_a\le1$ from that point onward and \eqref{eq:Phi-log-derivative} would force $\Phi_a$ to be nondecreasing eventually, contradicting $\Phi_a(u)\to0$.  Any later first passage above $1$ would occur to the right of $u_0$ with nonnegative slope, contradicting \eqref{eq:D-crossing-slope}.  Thus $u_\beta<u_0$ and $D_a'(u_\beta)>0$, so the crossing is upward.

There is no further root of $D_a-1$ to the left of $u_0$: after the first upward crossing, any return from values above $1$ to values below $1$ would require a root with nonpositive derivative, whereas every root to the left of $u_0$ has positive derivative by \eqref{eq:D-crossing-slope}.  If there were a first further root $v>u_0$, then \eqref{eq:D-crossing-slope} would give $D_a'(v)<0$, so $D_a<1$ immediately after $v$.  No later root could return $D_a$ above $1$, because every root to the right of $u_0$ again has negative derivative.  Hence $D_a<1$ for all sufficiently large $u$, and \eqref{eq:Phi-log-derivative} would make the positive function $\Phi_a$ eventually increasing, contradicting $\Phi_a(u)\to0$.  Therefore $u_\beta$ is the unique solution of \eqref{eq:subbotin-critical-equation}, and $\Phi_a$ increases on $(0,u_\beta)$ and decreases on $(u_\beta,\infty)$.  Equations \eqref{eq:subbotin-offcentre-location} and \eqref{eq:subbotin-below-factor} follow.  At the maximizing point, \eqref{eq:subbotin-critical-equation} gives
\[
 e^{u_\beta}\{1-P_a(u_\beta)^2\}
 =2P_a(u_\beta)\frac{u_\beta^{a-1}}{\Gamma(a)},
\]
which proves the second equality in \eqref{eq:subbotin-below-factor-alt}.  Since $\Phi_a(0)=1$ and $\Phi_a$ initially increases, its maximum is strictly larger than $1$; using $1/(4p_{\beta,\lambda}(0))=\Gamma(a)/(2\beta\lambda^a)$ proves the strict inequality.

It remains to verify the first-order factors.  On $(0,\infty)$ the drift $g(x)=\beta\lambda x^{\beta-1}$ is positive and increasing, so the usual upper Mills argument gives $r(x)\le1/g(x)$.  Furthermore, with
\[
 \ell(x):=\frac{g(x)}{g(x)^2+g'(x)},
\]
one has
\[
 1-g\ell+\ell'
 =\frac{2(g')^2-gg''}{(g^2+g')^2}>0,
 \qquad x>0,
\]
because
\[
 2(g')^2-gg''
 =\beta^3(\beta-1)\lambda^2x^{2\beta-4}>0.
\]
As in Proposition~\ref{prop:cont-mills},
\[
 \bigl(e^{-V}(r-\ell)\bigr)'=-e^{-V}(1-g\ell+\ell')<0
 \qquad (x>0),
\]
and both $e^{-V}r$ and $e^{-V}\ell$ tend to $0$ at infinity; hence $r\ge\ell$ on $(0,\infty)$.  It follows that $m''(x)>0$ for $x>0$, while for $x<0$ symmetry gives
\[
 m''(x)=\{g'(-x)+g(-x)^2\}r(-x)-g(-x)\ge0.
\]
Although $m''$ need not exist at the origin, $g$ is continuous there with $g(0)=0$, and $m'(x)=1+g(x)m(x)$ is continuous with $m'(0)=1$.  Since $m'$ is nondecreasing on each open half-line, it is therefore nondecreasing on all of $\R$.  The proof of Theorem~\ref{thm:continuous-main}(b) now applies verbatim on the two open half-lines and extends across the origin by continuity.  In particular, $gf_z$ is nondecreasing and
\[
 \norm{g f_z}_\infty=\max\{F(z),1-F(z)\},
\]
while
\[
 \norm{f_z'}_\infty
 =\max\bigl\{(1-F(z))m'(z),\,F(z)m'(-z)\bigr\}.
\]
Finally, the two Mills bounds give
\[
 \frac{1}{1+g'(x)/g(x)^2}\le g(x)r(x)\le1,
 \qquad
 \frac{g'(x)}{g(x)^2}=\frac{\beta-1}{\beta\lambda x^\beta}\longrightarrow0,
\]
so $g(x)r(x)\to1$.  The symmetric Mills identity shows that the two terms in the last maximum add to $1$.  Letting $|z|\to\infty$ proves \eqref{eq:subbotin-below-first-order}.
\end{proof}

\begin{remark}[The transition at $\beta=1$ and $\beta=2$]\label{rem:subbotin-thresholds}
Fix $\lambda>0$.  The maximizer parameter and the sharp solution factor from Proposition~\ref{prop:subbotin-below-two} satisfy
\[
 u_\beta\longrightarrow0\quad(\beta\uparrow2),
 \qquad
 u_\beta\longrightarrow\infty\quad(\beta\downarrow1),
\]
and therefore
\[
 x_{\beta,\lambda}\longrightarrow0\quad(\beta\uparrow2),
 \qquad
 x_{\beta,\lambda}\longrightarrow\infty\quad(\beta\downarrow1),
\]
while
\[
 C_{\mathrm K}(\beta,\lambda)
 \longrightarrow\frac{\sqrt\pi}{4\sqrt\lambda}
 \quad(\beta\uparrow2),
 \qquad
 C_{\mathrm K}(\beta,\lambda)
 \longrightarrow\frac1\lambda
 \quad(\beta\downarrow1).
\]

For the first endpoint, write $a=1/\beta$ and let $u_0(a)$ be defined by \eqref{eq:R-u0}.  Proposition~\ref{prop:subbotin-below-two} gives $0<u_\beta<u_0(a)$.  For every $\eta>0$, one has $R_{1/2}(\eta)<1/2$; continuity in $a$ and the strict decrease of $R_a$ therefore give $u_0(a)<\eta$ for $a>1/2$ sufficiently close to $1/2$.  Hence $u_\beta\to0$, and \eqref{eq:subbotin-below-factor} yields the Gaussian endpoint value.

For $a\uparrow1$, on the other hand,
\[
 D_a(u)\longrightarrow
 \frac{2(1-e^{-u})}{2-e^{-u}}<1
\]
for every fixed $u>0$.  To make this uniform near the origin, choose $a_0\in(1/2,1)$.  For $a\in[a_0,1]$ and small $u$,
\[
 P_a(u)\le C u^a,\qquad P_a'(u)\le C u^{a-1},
\]
and hence $D_a(u)\le C u^{2a_0-1}$.  After fixing $\delta>0$ so that this is $<1$ on $(0,\delta]$, the convergence above is uniform on each $[\delta,M]$.  Since $D_a(u_\beta)=1$, it follows that $u_\beta\to\infty$.

Put $\epsilon=1-a$.  For $a$ close to $1$, $\gamma(a,1)$ is bounded below by a positive constant, and for $u\ge1$,
\[
 R_a(u)=\frac{u^ae^{-u}}{\gamma(a,u)}\le C u e^{-u}.
\]
Thus $R_a(1/\epsilon)<\epsilon$ for all sufficiently small $\epsilon$.  Since $R_a$ is strictly decreasing and $R_a(u_0)=\epsilon$, we obtain $u_\beta<u_0(a)<1/\epsilon$, so $\epsilon\log u_\beta\to0$.  Moreover $P_a(u_\beta)\to1$, since the upper incomplete-gamma tail is uniformly exponentially small for $a$ near $1$.  Formula~\eqref{eq:subbotin-below-factor-alt} then gives the Laplace endpoint value $1/\lambda$.

At $\beta=1$ itself,
\[
 p_{1,\lambda}(x)=\frac\lambda2e^{-\lambda|x|},
 \qquad
 H_{1,\lambda}(x)=\frac1\lambda\left(1-\frac12e^{-\lambda x}\right),
 \qquad x\ge0,
\]
so the supremum $1/\lambda$ is approached only in the tail.  Thus the zeroth-order envelope exhibits the simple transition
\begin{align*}
 \beta=1 &: \quad \text{tail supremum},\\
 1<\beta<2 &: \quad \text{two off-center maxima},\\
 \beta\ge2 &: \quad \text{maximum at the origin}.
\end{align*}
\end{remark}

\section*{Declaration on AI-assisted preparation}
OpenAI's ChatGPT was used particularly intensively and proved exceptionally efficient in supporting systematic source and novelty searches, including the identification of potentially relevant literature, the tracing of related results, and the cross-checking of priority claims; minor editorial assistance was also provided.  All mathematical and priority conclusions were independently verified by the author.

\raggedbottom

\end{document}